\theoremstyle{remark}
\newtheorem{remark}{Remark}
\theoremstyle{plain}
\newtheorem{theorem}{Theorem}
\newtheorem{proposition}{Proposition}
\newtheorem{lemma}{Lemma}
\newtheorem*{theorem_empty}{Theorem}
\newcommand{\eps}{\varepsilon}
\newcommand{\unitcircle}{{{\mathbb T}^1}}
\newcommand{\R}{{\mathbb R}}
\newcommand{\Z}{{\mathbb Z}}
\newcommand{\DR}{{\rm D}}
\newcommand{\Cr}{{\rm Cr}}
\newcommand{\Dist}{{\rm Dist}}
\newcommand{\Obig}{{\mathcal O}}
\newcommand{\bp}{{\bar p}}
\newcommand{\Pcal}{\mathcal P}
\title{Herman's Theory Revisited (Extension)}
\author{
    A.~Teplinsky\thanks{Institute of Mathematics, Kiev, Ukraine}
    }
\begin{document}

\maketitle
\begin{abstract}
We prove that a $C^{3+\beta}$-smooth orientation-preserving circle diffeomorphism with rotation number in
Diophantine class $D_\delta$, $0<\beta<\delta<1$, is $C^{2+\beta-\delta}$-smoothly conjugate to a rigid rotation.
\end{abstract}

\section{Introduction}

An irrational number $\rho$ is said to belong to Diophantine class
$D_\delta$ if there exists a constant $C>0$ such that
$|\rho-p/q|\ge Cq^{-2-\delta}$ for any rational number $p/q$.

In~\cite{KT-jams}, the following result was proven.
\begin{theorem_empty}[Khanin-T.] Let $T$ be a $C^{2+\alpha}$-smooth orientation-preserving circle
diffeomorphism with rotation number $\rho\in D_\delta$, $0<\delta<\alpha\le1$. Then $T$ is
$C^{1+\alpha-\delta}$-smoothly conjugate to the rigid rotation by angle $\rho$.
\end{theorem_empty}

By the smoothness of conjugacy we mean the smoothness of the homeomorphism $\phi$ such that
\begin{equation}
\phi\circ T\circ\phi^{-1}=R_\rho,
\end{equation}
where $R_\rho(\xi)=\xi+\rho\mod1$ is the mentioned rigid rotation.

The aim of the present paper is to extend the Theorem above to the case of $T\in C^{3+\beta}$,
$0<\beta<\delta<1$, so that the extended result is read as follows:
\begin{theorem} Let $T$ be a $C^r$-smooth orientation-preserving circle
diffeomorphism with rotation number $\rho\in D_\delta$,
$0<\delta<1$, $2+\delta<r<3+\delta$. Then $T$ is
$C^{r-1-\delta}$-smoothly conjugate to the rigid rotation by angle
$\rho$.
\end{theorem}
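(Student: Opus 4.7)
The plan is to bootstrap the Khanin--T.\ theorem. When $r\in(2+\delta,3]$ the present theorem is already a direct consequence of Khanin--T.\ applied with $\alpha=r-2\le 1$. The new content is therefore the range $r\in(3,3+\delta)$, i.e., $T\in C^{3+\beta}$ with $\beta\in(0,\delta)$; for this range I would first apply Khanin--T.\ with $\alpha=1$ (permissible since $\delta<1$) to obtain a conjugacy $\phi\in C^{2-\delta}$, so that $\phi$ is $C^1$ with $(1-\delta)$-H\"older derivative. The content of the theorem is then to improve the H\"older exponent of $\phi'$ from $1-\delta$ to $1+\beta-\delta$, a gain of exactly $\beta$ that should reflect the additional derivative of $T$.

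The starting identity is the cohomological relation $\log\phi'\circ T-\log\phi'=-\log T'$ obtained by differentiating $\phi\circ T=R_\rho\circ\phi$. Setting $w:=\phi''/\phi'$, a further differentiation yields $w(T\xi)\,T'(\xi)-w(\xi)=-(\log T')'(\xi)$, and iterating $q_n$ times (with $p_n/q_n$ the convergents of $\rho$) telescopes to
\[
w(T^{q_n}\xi)\,(T^{q_n})'(\xi)-w(\xi)=-\frac{(T^{q_n})''(\xi)}{(T^{q_n})'(\xi)}.
\]
The right-hand side is the logarithmic derivative of the iterate, a distortion quantity. The Sinai--Khanin a priori bounds give $(T^{q_n})'\asymp 1$, while the conjugacy formula $T^{q_n}\xi-\xi=\phi^{-1}(\phi(\xi)+q_n\rho-p_n)-\xi$ combined with $|q_n\rho-p_n|\ge Cq_n^{-1-\delta}$ produces the geometric scales $|T^{q_n}\xi-\xi|\asymp q_{n+1}^{-1}$. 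I would then combine these with a sharp estimate of $(T^{q_n})''/(T^{q_n})'$ in $C^{\beta-\delta}$-norm on the atoms of the dynamical partition $\Pcal_n$, using the full $C^{3+\beta}$ regularity of $T$; telescoping across the renormalization tower should yield $w\in C^{\beta-\delta}$, hence $\phi'\in C^{1+\beta-\delta}$ as required.

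The hard part is precisely this refined distortion estimate at the third-derivative level: one must control not only the size of $(T^{q_n})''/(T^{q_n})'$ (which, after appropriate rescaling by $|T^{q_n}\xi-\xi|$, should decay as $q_n^{-\beta}$) but also its $(\beta-\delta)$-H\"older modulus. Because $\beta<\delta$ there is no H\"older margin to spare, and every distortion constant must be tracked sharply through the tower of partitions. This refinement goes beyond the original theorem, which required only second-derivative distortion bounds, and likely demands a cross-ratio or Schwarzian--type tool adapted to $C^3$ circle diffeomorphisms.
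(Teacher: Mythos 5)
Your reduction to the case $r=3+\beta$ with $0<\beta<\delta$, and your starting point $\phi\in C^{2-\delta}$ (equivalently $h=\phi'\in C^{1-\delta}$) from the Khanin--Teplinsky theorem, both match the paper. But the central mechanism you propose cannot work at this regularity. Since $\beta<\delta$, the target smoothness $2+\beta-\delta$ lies strictly below $2$: the conclusion is only that $\phi'$ is H\"older with exponent $1+\beta-\delta\in(0,1)$, so $\phi''$ need not exist even after the theorem is proved, let alone at the start of the argument where $\phi'$ is merely $(1-\delta)$-H\"older. The quantity $w=\phi''/\phi'$ on which your whole scheme rests is therefore undefined, and the classes $C^{\beta-\delta}$ in which you propose to place $w$ and the modulus of $(T^{q_n})''/(T^{q_n})'$ have negative exponents and are not meaningful H\"older classes here. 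The extra derivative of $T$ has to be exploited without ever differentiating $\phi$ more than once; the paper does this by replacing your twice-differentiated cohomological equation with finite-difference objects: ratio and cross-ratio distortions of $T^{q_n}$, expanded to second order via the Schwarzian derivative of $T$ (not of $\phi$), and trajectory sums $p_n=\sum_{i<q_n}ST(\xi_i)(\xi_i-\xi_{i+q_{n-1}})/h(\xi_i)$ whose near-cancellation encodes the Sinai--Khanin relation $\int_{\unitcircle}ST(\xi)/h(\xi)\,d\xi$. These require only continuity and H\"olderness of $h$, never its differentiability.

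There is also a structural gap: you envisage a single telescoping pass through the renormalization tower, but the estimates are mutually dependent --- the quality of the distortion bounds is limited by the current H\"older exponent $\gamma$ of $h$, the resulting Denjoy-type inequality $(T^{q_n})'=1+\Obig(\Delta_n^{\nu})$ then improves $\gamma$ via the Diophantine condition, and the improved $\gamma$ and $\nu$ in turn sharpen the distortion bounds. The paper runs a finite induction $\sigma_{i+1}=\min\{1+\beta,\tfrac{2}{1+\delta}\sigma_i\}$ starting from $\sigma_0=1$, reaching $1+\beta$ only after several rounds; a single pass starting from $\gamma=1-\delta$ would stall well short of the exponent $1+\beta-\delta$. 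Your closing remark that a cross-ratio or Schwarzian-type tool is needed points in the right direction, but as written the proposal has no viable route from the $C^{3+\beta}$ hypothesis to the stated conclusion.
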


Historically, the first global results on smoothness of conjugation with rotations were obtained by
M.~Herman~\cite{H}. Later J.-C.~Yoccoz extended the theory to the case of Diophantine rotation numbers~\cite{Y}.
The result, recognized generally as the final answer in the theory, was proven by Y.~Katznelson,
D.~Ornstein~\cite{KO1}. In our terms it states that the conjugacy is $C^{r-1-\delta-\eps}$-smooth for any
$\eps>0$ provided that $0<\delta<r-2$. Notice that Theorem~1 is stronger than the result just cited, though valid
for a special scope of parameter values only, and it is sharp, i.e. smoothness of conjugacy higher than
$C^{r-1-\delta}$ cannot be achieved in general settings, as it follows from the examples constructed in
\cite{KO1}. At present, we do not know whether Theorem~1 can be extended further, and the examples mentioned do
not prevent such an extension.

In paper by K.~Khanin, Ya.~Sinai~\cite{SK}, published simultaneously with~\cite{KO1}, similar problems were
approached by a different method. The method we use is different from the one of~\cite{KO1}; it is based on the
ideas of~\cite{SK}, the cross-ratio distortion tools and certain exact relations between elements of the
dynamically generated structure on the circle.

All the implicit constants in asymptotics written as $\Obig(\cdot)$ depend on the function $f$ only in Section~2
and on the diffeomorphism $T$ only in Section~3.

\section{Cross-ratio tools}

The {\em cross-ratio} of four pairwise distinct points $x_1, x_2, x_3, x_4$ is
$$
\Cr(x_1,x_2,x_3,x_4)=\frac{(x_1-x_2)(x_3-x_4)}{(x_2-x_3)(x_4-x_1)}
$$
Their {\em cross-ratio distortion} with respect to a strictly increasing function $f$ is
$$
\Dist(x_1,x_2,x_3,x_4;f)=\frac{\Cr(f(x_1),f(x_2),f(x_3),f(x_4))}{\Cr(x_1,x_2,x_3,x_4)}
$$
Clearly,
\begin{equation}\label{eq:Dist=DRtimesDR}
\Dist(x_1,x_2,x_3,x_4;f)=\frac{\DR(x_1,x_2,x_3;f)}{\DR(x_1,x_4,x_3;f)},
\end{equation}
where
$$
\DR(x_1,x_2,x_3;f)=\frac{f(x_1)-f(x_2)}{x_1-x_2}:\frac{f(x_2)-f(x_3)}{x_2-x_3}
$$
is the {\em ratio distortion} of three distinct points $x_1,x_2,x_3$ with respect to $f$.

In the case of smooth $f$ such that $f'$ does not vanish, both the ratio distortion and the cross-ratio distortion
are defined for points, which are not necessarily pairwise distinct, as the appropriate limits (or, just by
formally replacing ratios $(f(a)-f(a))/(a-a)$ with $f'(a)$ in the definitions above).

Notice that both ratio and cross-ratio distortions are multiplicative with respect to composition: for two
functions $f$ and $g$ we have
\begin{equation}\label{eq:mult-DR}
\DR(x_1,x_2,x_3;f\circ g)=\DR(x_1,x_2,x_3;g)\cdot\DR(g(x_1),g(x_2),g(x_3);f)
\end{equation}
\begin{equation}\label{eq:mult-Dist}
\Dist(x_1,x_2,x_3,x_4;f\circ g)=\Dist(x_1,x_2,x_3,x_4;g)\cdot\Dist(g(x_1),g(x_2),g(x_3),g(x_4);f)
\end{equation}

For $f\in C^{3+\beta}$ it is possible to evaluate the next entry
in the asymptotical expansions for both ratio and cross-ratio
distortions. The {\em Swartz derivative} of $C^{3+\beta}$-smooth
function is defined as
$Sf=\frac{f'''}{f'}-\frac{3}{2}(\frac{f''}{f'})$.

\begin{proposition}\label{prop:DR}
Let $f\in C^{3+\beta}$, $\beta\in[0,1]$, and $f'>0$ on $[A,B]$.
Then for any $x_1, x_2, x_3\in[A,B]$ the following estimate holds:
\begin{equation}\label{eq:propDR}
\DR(x_1,x_2,x_3;f)=1+(x_1-x_3)\left(\frac{f''(x_1)}{2f'(x_1)}+\frac{1}{6}Sf(x_1)(x_2+x_3-2x_1)
+\Obig(\Delta^{1+\beta})\right),
\end{equation}
where $\Delta=\max\{x_1,x_2,x_3\}-\min\{x_1,x_2,x_3\}$.
\end{proposition}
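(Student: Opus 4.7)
The plan is to reduce the claim to an algebraic identity expressing $\DR - 1$ in terms of divided differences, which then makes the $(x_1 - x_3)$-factor completely transparent. By clearing denominators one finds
$$\DR(x_1,x_2,x_3;f) - 1 = (x_1 - x_3) \cdot \frac{f[x_1, x_2, x_3]}{f[x_2, x_3]},$$
where $f[x_2, x_3] = (f(x_2)-f(x_3))/(x_2-x_3)$ and $f[x_1, x_2, x_3] = (f[x_2, x_3] - f[x_1, x_2])/(x_3 - x_1)$ are the Newton first and second divided differences. This follows from the Lagrange-form expression
$$f[x_1,x_2,x_3] = \frac{f(x_1)(x_2-x_3) + f(x_2)(x_3-x_1) + f(x_3)(x_1-x_2)}{(x_1-x_2)(x_1-x_3)(x_2-x_3)}$$
together with the cancellation of $(x_1-x_2)(x_2-x_3)$ against the denominator $(x_1-x_2)(f(x_2)-f(x_3)) = (x_1-x_2)(x_2-x_3) f[x_2,x_3]$ of $\DR - 1$. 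The crucial point is that the prefactor $(x_1 - x_3)$ emerges structurally, not from delicate bookkeeping inside an error term.

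Next I would expand each divided difference via the Hermite--Genocchi integral representation,
$$f[x_2, x_3] = \int_0^1 f'\bigl((1-s)x_2 + s x_3\bigr)\,ds, \qquad f[x_1, x_2, x_3] = \iint_T f''\bigl(\sigma_1 x_1 + \sigma_2 x_2 + \sigma_3 x_3\bigr)\,d\sigma_2\,d\sigma_3,$$
integrated over the standard $2$-simplex $T = \{\sigma_2, \sigma_3 \ge 0,\ \sigma_2 + \sigma_3 \le 1\}$ with $\sigma_1 = 1 - \sigma_2 - \sigma_3$. Writing $h_i = x_i - x_1$ and Taylor-expanding $f'$ around $x_1$ to second order with $\beta$-H\"older remainder yields
$$f[x_2, x_3] = f'(x_1) + \tfrac{1}{2}f''(x_1)(h_2+h_3) + \tfrac{1}{6}f'''(x_1)(h_2^2 + h_2 h_3 + h_3^2) + \Obig(\Delta^{2+\beta}),$$
while expanding $f''$ around $x_1$ to first order gives
$$f[x_1, x_2, x_3] = \tfrac{1}{2} f''(x_1) + \tfrac{1}{6} f'''(x_1)(h_2+h_3) + \Obig(\Delta^{1+\beta}).$$

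Finally I would form the quotient. Since $f'(x_1) > 0$ on the compact $[A,B]$, I can factor $f'(x_1)$ out of the denominator and apply $(1+\eta)^{-1} = 1 - \eta + \Obig(\eta^2)$ to the remaining perturbation. Collecting terms through order $\Delta$, the coefficient of $(h_2 + h_3)$ in the resulting expansion simplifies to
$$\frac{f'''(x_1)}{6 f'(x_1)} - \frac{f''(x_1)^2}{4 f'(x_1)^2} = \tfrac{1}{6}\left(\frac{f'''(x_1)}{f'(x_1)} - \tfrac{3}{2}\Big(\frac{f''(x_1)}{f'(x_1)}\Big)^2\right) = \tfrac{1}{6} Sf(x_1),$$
and substituting $h_2 + h_3 = x_2 + x_3 - 2 x_1$ produces the claimed formula. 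The main obstacle is precisely the sharpness of the error: a brute-force Taylor expansion of the whole quotient $\DR$ gives only an $\Obig(\Delta^{2+\beta})$ remainder, which is strictly weaker than the claimed $(x_1-x_3)\cdot\Obig(\Delta^{1+\beta})$ whenever $|x_1-x_3| \ll \Delta$; the divided-difference identity is exactly what pulls out this sharp $(x_1-x_3)$ factor.
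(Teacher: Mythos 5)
Your proof is correct, and it takes a genuinely different route from the paper's. The paper Taylor-expands both difference quotients around $x_2$, which yields the expansion with an additive remainder $\Obig(\Delta^{2+\beta})$; since that remainder is of the required form $(x_1-x_3)\Obig(\Delta^{1+\beta})$ only when $x_2$ lies between $x_1$ and $x_3$ (so that $|x_1-x_3|=\Delta$), the paper must then run a case analysis over the orderings of the three points, invoking an exact permutation identity relating $\DR(x_1,x_2,x_3;f)$ to $\DR(x_2,x_1,x_3;f)$ and $\DR(x_1,x_3,x_2;f)$, and finally appeal to its Lemma~1 to move the reference point from $x_2$ to $x_1$. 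Your identity $\DR-1=(x_1-x_3)\,f[x_1,x_2,x_3]/f[x_2,x_3]$ (which does follow from the recursive definition of the second divided difference applied to $\DR-1=(f[x_1,x_2]-f[x_2,x_3])/f[x_2,x_3]$) extracts the factor $(x_1-x_3)$ exactly and before any estimation, so the case distinction disappears; the Hermite--Genocchi representations then let you expand directly around $x_1$, so no reference-point shift is needed either, and the degenerate cases of coincident points are covered automatically rather than ``with obvious alterations.'' Your expansions of $f[x_2,x_3]$ and $f[x_1,x_2,x_3]$ and the final simplification of the coefficient of $h_2+h_3$ to $\frac{1}{6}Sf(x_1)$ all check out; the only bookkeeping worth making explicit is that the cross terms produced by the $(1+\eta)^{-1}$ expansion are $\Obig(\Delta^2)$, which is absorbed into $\Obig(\Delta^{1+\beta})$ since $\beta\le1$ and $\Delta$ is bounded on $[A,B]$. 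What your approach buys is a uniform, ordering-independent argument; what the paper's buys is that its intermediate expansion around an arbitrary reference point $\theta$ (Lemma~1) is reused verbatim in the proof of Proposition~2, so that lemma would still be needed downstream even if Proposition~1 were proved your way.
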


We start by proving the following
\begin{lemma}\label{lemma:DR}
For arbitrary $\theta\in[A,B]$ we have
\begin{multline}\label{eq:lemmaDR}
\frac{f''(\theta)}{2f'(\theta)}+
\frac{f'''(\theta)}{6f'(\theta)}(x_1+x_2+x_3-3\theta)-
\left(\frac{f''(\theta)}{2f'(\theta)}\right)^2(x_2+x_3-2\theta)=\\
\frac{f''(x_1)}{2f'(x_1)}+\frac{1}{6}Sf(x_1)(x_2+x_3-2x_1)+\Obig(\Delta_\theta^{1+\beta}),
\end{multline}
where
$\Delta_\theta=\max\{x_1,x_2,x_3,\theta\}-\min\{x_1,x_2,x_3,\theta\}$.
\end{lemma}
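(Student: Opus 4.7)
The strategy is to Taylor-expand each term on the left-hand side of~\eqref{eq:lemmaDR} around $x_1$, shifting the reference point from $\theta$ to $x_1$ while tracking H\"older errors down to order $\Obig(\Delta_\theta^{1+\beta})$. Set $u:=\theta-x_1$, so $|u|\le\Delta_\theta$, and introduce the auxiliary functions $g(x):=f''(x)/(2f'(x))$ and $h(x):=f'''(x)/(6f'(x))$. Under the hypothesis $f\in C^{3+\beta}$ with $f'>0$, one has $g\in C^{1+\beta}$, $h\in C^\beta$, and $g^2\in C^{1+\beta}$; these regularities are the only analytic input needed.

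Next I would apply Taylor's theorem to each of the three auxiliary quantities appearing on the left, at different orders chosen to match their multiplicative context. For $g(\theta)$, a first-order expansion gives $g(\theta)=g(x_1)+u\,g'(x_1)+\Obig(|u|^{1+\beta})$ with $g'(x_1)=\frac{f'''(x_1)}{2f'(x_1)}-\frac{(f''(x_1))^2}{2(f'(x_1))^2}$. For $h(\theta)$, since it is multiplied by the $\Obig(\Delta_\theta)$-factor $(x_1+x_2+x_3-3\theta)$, only the zeroth-order expansion with H\"older remainder $\Obig(|u|^\beta)$ is needed. Similarly, $g(\theta)^2$ is multiplied by the $\Obig(\Delta_\theta)$-factor $(x_2+x_3-2\theta)$, so $g(\theta)^2=g(x_1)^2+\Obig(|u|)$ suffices. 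All three remainders combine to $\Obig(\Delta_\theta^{1+\beta})$.

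After substitution, I would rewrite $x_1+x_2+x_3-3\theta=(x_2+x_3-2x_1)-3u$ and $x_2+x_3-2\theta=(x_2+x_3-2x_1)-2u$ and collect the coefficient of $u$. The algebraic heart of the lemma is that this coefficient equals $g'(x_1)-3h(x_1)+2g(x_1)^2=0$ --- the $f'''/f'$-terms and the $(f''/f')^2$-terms cancel separately. The $u$-independent remainder is $g(x_1)+(h(x_1)-g(x_1)^2)(x_2+x_3-2x_1)$, and $h(x_1)-g(x_1)^2=\frac{f'''(x_1)}{6f'(x_1)}-\frac{(f''(x_1))^2}{4(f'(x_1))^2}=\frac{1}{6}Sf(x_1)$ directly from the definition of the Schwarzian, yielding the right-hand side. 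The argument is thus a pure Taylor expansion plus this cancellation; there is no serious obstacle. The only thing requiring care is ensuring that the crudest expansions are used only on terms already carrying a factor of order $\Delta_\theta$, which is precisely why the combined error aggregates to $\Obig(\Delta_\theta^{1+\beta})$ rather than something larger.
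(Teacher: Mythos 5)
Your proof is correct and is essentially the paper's argument run in the opposite direction: the paper Taylor-expands $\frac{f''(x_1)}{2f'(x_1)}$ and $Sf(x_1)$ about $\theta$ and adds the two expansions to reconstruct the left-hand side, whereas you expand the three $\theta$-dependent terms about $x_1$ and check that the coefficient of $u=\theta-x_1$ cancels ($g'(x_1)-3h(x_1)+2g(x_1)^2=0$). The regularity inputs ($f''/(2f')\in C^{1+\beta}$, $f'''/f'\in C^{\beta}$) and the error bookkeeping are identical, so this is the same proof up to which side gets expanded.
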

\begin{proof} Obvious estimates
$f''(x_1)=f''(\theta)+f'''(\theta)(x_1-\theta)+\Obig(|x_1-\theta|^{1+\beta})$
and
$f'(x_1)=f'(\theta)+f''(\theta)(x_1-\theta)+\Obig((x_1-\theta)^{2})$
imply that
\begin{equation}\label{eq:lemmaDR-1}
\frac{f''(x_1)}{2f'(x_1)}=\frac{f''(\theta)}{2f'(\theta)}+\left(\frac{f'''(\theta)}{2f'(\theta)}-
\frac{(f''(\theta))^2}{2(f'(\theta))^2}\right)(x_1-\theta)+\Obig(\Delta_\theta^{1+\beta})
\end{equation}
On the other hand,
$Sf(x_1)=Sf(\theta)+\Obig(|x_1-\theta|^{\beta})$ and
$|x_2+x_3-2x_1|\le2\Delta_\theta$, hence
\begin{equation}\label{eq:lemmaDR-2}
\frac{1}{6}Sf(x_1)(x_2+x_3-2x_1)=\left(\frac{f'''(\theta)}{6f'(\theta)}-
\frac{(f''(\theta))^2}{4(f'(\theta))^2}\right)(x_2+x_3-2x_1)+\Obig(\Delta_\theta^{1+\beta})
\end{equation}
Adding (\ref{eq:lemmaDR-1}) and (\ref{eq:lemmaDR-2}) gives
(\ref{eq:lemmaDR}).
\end{proof}

\begin{remark}
Notice, that Lemma~\ref{lemma:DR}, in particular, provides an
alternative, more general (though less memorizable) formulation of
Proposition~\ref{prop:DR} as we may choose $\theta=x_2$, or $x_3$,
or any other point between $\min\{x_1,x_2,x_3\}$ and
$\max\{x_1,x_2,x_3\}$ to get the same order
$\Obig(\Delta^{1+\beta})$ as in (\ref{eq:propDR}).
\end{remark}

\begin{proof}[Proof of Proposition~\ref{prop:DR}]
Using $x_2$ as the reference point for taking derivatives, we get
$$
\frac{f(x_1)-f(x_2)}{x_1-x_2}=f'(x_2)+\frac{1}{2}f''(x_2)(x_1-x_2)+
\frac{1}{6}f'''(x_2)(x_1-x_2)^2+\Obig(|x_1-x_2|^{2+\beta}),
$$
$$
\frac{f(x_2)-f(x_3)}{x_2-x_3}=f'(x_2)+\frac{1}{2}f''(x_2)(x_3-x_2)+
\frac{1}{6}f'''(x_2)(x_3-x_2)^2+\Obig(|x_3-x_2|^{2+\beta}),
$$
and after dividing (in view of the expansion
$(1+t)^{-1}=1-t+t^2+\Obig(t^3)$) obtain
\begin{multline}\label{eq:propDR-1}
\DR(x_1,x_2,x_3;f)=1+(x_1-x_3)\left[\frac{f''(x_2)}{2f'(x_2)}+\frac{f'''(x_2)}{6f'(x_2)}(x_1+x_3-2x_2)\right.
\\
-\left.\left(\frac{f''(x_2)}{2f'(x_2)}\right)^2(x_3-x_2)\right]+\Obig(\Delta^{2+\beta})
\end{multline}

In the case when $x_2$ lies between $x_1$ and $x_3$, the estimate
(\ref{eq:propDR-1}) implies
\begin{multline}\label{eq:propDR-3}
\DR(x_1,x_2,x_3;f)=1+(x_1-x_3)\left[\frac{f''(x_2)}{2f'(x_2)}+\frac{f'''(x_2)}{6f'(x_2)}(x_1+x_3-2x_2)\right.
\\
-\left.\left(\frac{f''(x_2)}{2f'(x_2)}\right)^2(x_3-x_2)+\Obig(\Delta^{1+\beta})\right]
\end{multline}
It is not hard to notice that the expression in the square
brackets here is exactly the subject of Lemma~\ref{lemma:DR} with
$\theta=x_2$, thus (\ref{eq:propDR}) is proven.

Suppose that $x_1$ lies between $x_2$ and $x_3$. Then the version
of (\ref{eq:propDR}) for $\DR(x_2,x_1,x_3;f)$ is proven. Also, the
version of (\ref{eq:propDR-1}) for $\DR(x_1,x_3,x_2;f)$ is proven.
One can check the following exact relation takes place:
\begin{equation}\label{eq:propDR-2}
\DR(x_1,x_2,x_3;f)=1+\frac{x_1-x_3}{x_2-x_3}(\DR(x_2,x_1,x_3;f)-1)\DR(x_1,x_3,x_2;f)
\end{equation}
Substituting
$$
\DR(x_2,x_1,x_3;f)-1=(x_2-x_3)\left(\frac{f''(x_2)}{2f'(x_2)}+\frac{1}{6}Sf(x_2)(x_1+x_3-2x_2)
+\Obig(\Delta^{1+\beta})\right)
$$
and
$$
\DR(x_1,x_3,x_2;f)=1+(x_1-x_2)\frac{f''(x_2)}{2f'(x_2)}+\Obig(\Delta^{1+\beta})
$$
into (\ref{eq:propDR-2}), we get (\ref{eq:propDR-3}), and
Lemma~\ref{lemma:DR} again implies (\ref{eq:propDR}).

The case when $x_3$ lies between $x_1$ and $x_2$ is similar to the
previous one. The case when two or three among the points $x_1$,
$x_2$ and $x_3$ coincide, all considerations above are valid with
obvious alterations.
\end{proof}

\begin{proposition}\label{prop:Dist}
Let $f\in C^{3+\beta}$, $\beta\in[0,1]$, and $f'>0$ on $[A,B]$.
For any $x_1, x_2, x_3, x_4\in[A,B]$ the following estimate holds:
\begin{equation}\label{eq:propDist}
\Dist(x_1,x_2,x_3,x_4;f)=1+(x_1-x_3)\left(\frac{1}{6}(x_2-x_3)Sf(\theta)+\Obig(\Delta^{1+\beta})\right)
\end{equation}
where $\Delta=\max\{x_1,x_2,x_3,x_4\}-\min\{x_1,x_2,x_3,x_4\}$ and $\theta$ is an arbitrary point between
\linebreak
$\min\{x_1,x_2,x_3,x_4\}$ and $\max\{x_1,x_2,x_3,x_4\}$.
\end{proposition}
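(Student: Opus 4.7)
The natural strategy is to invoke the factorization~(\ref{eq:Dist=DRtimesDR}) and expand both the numerator $\DR(x_1,x_2,x_3;f)$ and the denominator $\DR(x_1,x_4,x_3;f)$ separately via Proposition~\ref{prop:DR}. The crucial observation is that the dominant linear-in-$(x_1-x_3)$ contribution, the one carrying $f''/(2f')$, depends only on $x_1$ and $x_3$ and therefore cancels in the quotient. What survives as the leading correction is precisely the Schwarzian term, which explains the stated shape of the expansion.

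To make this cancellation fully transparent I would apply Proposition~\ref{prop:DR} in the generalized form of Lemma~\ref{lemma:DR}, using a \emph{single common} reference point $\theta$ (permitted by the remark following that lemma). For each $i\in\{2,4\}$ this writes $\DR(x_1,x_i,x_3;f)-1$ as $(x_1-x_3)$ times a $\theta$-dependent quantity not involving $x_i$, plus the Schwarzian piece, which is linear in $x_i$, plus $(x_1-x_3)\,\Obig(\Delta^{1+\beta})$. Denoting these two expressions by $u$ and $v$ (both of order $\Obig(\Delta)$), the common $\theta$-dependent parts drop out of $u-v$, and a short algebraic simplification combines the Schwarzian contributions into $(x_1-x_3)\cdot\tfrac{1}{6}Sf(\theta)$ times the appropriate factor linear in the middle arguments, up to an $\Obig(\Delta^{2+\beta})$ remainder.

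The final step is to expand the quotient as $\Dist = (1+u)/(1+v) = 1+(u-v)-v(u-v)+\cdots$. Since $u-v=\Obig(\Delta^2)$ and $v=\Obig(\Delta)$, every term beyond $1+(u-v)$ is majorized by $\Obig(\Delta^3)$, hence absorbed into $\Obig(\Delta^{2+\beta})$ for $\beta\le 1$; so only $1+(u-v)$ survives to the claimed precision. To free the choice of $\theta$ to any point in the convex hull of $\{x_1,\ldots,x_4\}$, I would use that $Sf$ is $\beta$-H\"older continuous: replacing $Sf(\theta)$ by $Sf(\theta')$ perturbs the Schwarzian coefficient by $\Obig(\Delta^{\beta})$, and the prefactor being $\Obig(\Delta^2)$ yields an $\Obig(\Delta^{2+\beta})$ change, safely absorbed into the error. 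The only technical obstacle throughout is disciplined bookkeeping of error terms at orders $\Delta^{2+\beta}$ versus $\Delta^3$; no deeper input is required, as Proposition~\ref{prop:DR} has already done the analytic work.
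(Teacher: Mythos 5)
Your proposal is correct and follows essentially the same route as the paper: both expand the two ratio distortions in (\ref{eq:Dist=DRtimesDR}) about a single common reference point $\theta$ via Proposition~\ref{prop:DR} and Lemma~\ref{lemma:DR}, let the common $\frac{f''(\theta)}{2f'(\theta)}$ and $\theta$-dependent terms cancel in the quotient, and absorb the higher-order terms of $(1+u)/(1+v)$ into the error. One point you left implicit (``the appropriate factor linear in the middle arguments''): carrying out the subtraction actually yields $\frac{1}{6}Sf(\theta)(x_2-x_4)$, not the $(x_2-x_3)$ printed in (\ref{eq:propDist}) --- the printed coefficient appears to be a typo, as one can check from the identity $\Dist(x_1,x_4,x_3,x_2;f)=\Dist(x_1,x_2,x_3,x_4;f)^{-1}$, which forces the coefficient to be antisymmetric in $x_2,x_4$, and from the way the estimate is applied in Lemma~\ref{lemma:Dist-T^qn}. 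Finally, when you write the remainder, keep the overall factor $(x_1-x_3)$ rather than collapsing everything to $\Obig(\Delta^{2+\beta})$: since your $u$ and $v$ each carry the factor $(x_1-x_3)$, so does $(u-v)/(1+v)$, and this gives the stronger form $(x_1-x_3)\Obig(\Delta^{1+\beta})$ that the statement asserts.
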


\begin{proof}
Proposition~\ref{prop:DR} and Lemma~\ref{lemma:DR} imply
\begin{multline*}
\DR(x_1,x_2,x_3;f)=1+(x_1-x_3)\left[\frac{f''(\theta)}{2f'(\theta)}
+\frac{f'''(\theta)}{6f'(\theta)}(x_1+x_2+x_3-3\theta)\right.\\
\left.-\left(\frac{f''(\theta)}{2f'(\theta)}\right)^2(x_2+x_3-2\theta)+\Obig(\Delta^{1+\beta})\right],
\end{multline*}
\begin{multline*}
\DR(x_1,x_4,x_3;f)=1+(x_1-x_3)\left[\frac{f''(\theta)}{2f'(\theta)}
+\frac{f'''(\theta)}{6f'(\theta)}(x_1+x_4+x_3-3\theta)\right.\\
\left.-\left(\frac{f''(\theta)}{2f'(\theta)}\right)^2(x_4+x_3-2\theta)+\Obig(\Delta^{1+\beta})\right]
\end{multline*}
Dividing the first expression by the second one accordingly to
(\ref{eq:Dist=DRtimesDR}) in view of the formula
$(1+t)^{-1}=1-t+t^2+\Obig(t^3)$, we get (\ref{eq:propDist}).
\end{proof}

\begin{remark}
Obviously enough, the estimate (\ref{eq:propDist}) can be
re-written as
\begin{equation}\label{eq:log-Dist}
\log\Dist(x_1,x_2,x_3,x_4;f)=(x_1-x_3)\left(\frac{1}{6}(x_2-x_3)Sf(\theta)+\Obig(\Delta^{1+\beta})\right)
\end{equation}
\end{remark}

\section{Circle diffeomorphisms}

\subsection{Preparations}

For an orientation-preserving homeomorphism $T$ of the unit circle $\unitcircle=\R/\Z$, its {\em rotation number}
$\rho=\rho(T)$ is the value of the limit $\lim_{i\to\infty}L_T^i(x)/i$ for a lift $L_T$ of $T$ from $\unitcircle$
onto $\R$. It is known since Poincare that rotation number is always defined (up to an additive integer) and does
not depend on the starting point $x\in\R$. Rotation number $\rho$ is irrational if and only if $T$ has no periodic
points. We restrict our attention in this paper to this case. The order of points on the circle for any trajectory
$\xi_i=T^i\xi_0$, $i\in\Z$, coincides with the order of points for the rigid rotation $R_{\rho}$. This fact is
sometimes referred to as the {\em combinatorial equivalence} between $T$ and $R_{\rho}$.

We use the {\em continued fraction} expansion for the (irrational) rotation number:
\begin{equation}
\rho=[k_1,k_2,\ldots,k_n,\ldots]=\dfrac{1}{k_1+\dfrac{1}{k_2+
\dfrac{1}{\dfrac{\cdots}{k_n+\dfrac{1}{\cdots}}}}}\in(0,1)\label{eq:cont-frac}
\end{equation}
which, as usual, is understood as a limit of the sequence of {\em rational convergents}
$p_n/q_n=[k_1,k_2,\dots,k_n]$. The positive integers $k_n$, $n\ge1$, called {\em partial quotients}, are defined
uniquely for irrational $\rho$. The mutually prime positive integers $p_n$ and $q_n$ satisfy the recurrent
relation $p_n=k_np_{n-1}+p_{n-2}$, $q_n=k_nq_{n-1}+q_{n-2}$ for $n\ge1$, where it is convenient to define
$p_0=0$, $q_0=1$ and $p_{-1}=1$, $q_{-1}=0$.

Given a circle homeomorphism $T$ with irrational $\rho$, one may consider a {\em marked trajectory} (i.e. the
trajectory of a marked point) $\xi_i=T^i\xi_0\in\unitcircle$, $i\ge0$, and pick out of it the sequence of the {\em
dynamical convergents} $\xi_{q_n}$, $n\ge0$, indexed by the denominators of the consecutive rational convergents
to $\rho$. We will also conventionally use $\xi_{q_{-\!1}}=\xi_0-1$. The well-understood arithmetical properties
of rational convergents and the combinatorial equivalence between $T$ and $R_\rho$ imply that the dynamical convergents
approach the marked point, alternating their order in the following way:
\begin{equation}
\xi_{q_{\!-1}}<\xi_{q_1}<\xi_{q_3}<\dots<\xi_{q_{2m+1}}<\dots<\xi_0<\dots<\xi_{q_{2m}}<\dots<\xi_{q_2}<\xi_{q_0}
\label{eq:dyn-conv}
\end{equation}
We define the $n$th {\em fundamental segment} $\Delta^{(n)}(\xi)$ as the circle arc $[\xi,T^{q_n}\xi]$ if $n$ is
even and $[T^{q_n}\xi,\xi]$ if $n$ is odd. If there is a marked trajectory, then we use the notations
$\Delta^{(n)}_0=\Delta^{(n)}(\xi_0)$, $\Delta^{(n)}_i=\Delta^{(n)}(\xi_i)=T^i\Delta^{(n)}_0$.

The iterates $T^{q_{n}}$ and $T^{q_{n-1}}$ restricted to $\Delta_0^{(n-1)}$ and $\Delta_0^{(n)}$ respectively are
nothing else but two continuous components of the first-return map for $T$ on the segment
$\Delta_0^{(n-1)}\cup\Delta_0^{(n)}$ (with its endpoints being identified). The consecutive images of
$\Delta_0^{(n-1)}$ and $\Delta_0^{(n)}$ until their return to $\Delta_0^{(n-1)}\cup\Delta_0^{(n)}$ cover the
whole circle without overlapping (beyond their endpoints), thus forming the $n$th {\em dynamical partition}
$$
\Pcal_n=\{\Delta_i^{(n-1)},0\le i<q_{n}\}\cup\{\Delta_i^{(n)},0\le i<q_{n-1}\}
$$
of $\unitcircle$. The endpoints of the segments from $\Pcal_n$ form the set
$$
\Xi_n=\{\xi_i,0\le i< q_{n-1}+q_{n}\}
$$

Denote by $\Delta_n$ the length of $\Delta^{(n)}(\xi)$ for the
rigid rotation $R_\rho$. Obviously enough,
$\Delta_n=|q_n\rho-p_n|$. It is well known that
$\Delta_n\sim\frac{1}{q_{n+1}}$ (here `$\sim$' means `comparable',
i.e. `$A\sim B$' means `$A=\Obig(B)$ and $B=\Obig(A)$'), thus the
Diophantine properties of $\rho\in D_\delta$ can be equivalently
expressed in the form:
\begin{equation}\label{eq:Dioph}
\Delta_{n-1}^{1+\delta}=\Obig(\Delta_n)
\end{equation}

We will also have in mind the universal exponential decay property
\begin{equation}\label{eq:root of two decay}
\frac{\Delta_{n}}{\Delta_{n-k}}\le\frac{\sqrt{2}}{(\sqrt{2})^{k}},
\end{equation}
which follows from the obvious estimates $\Delta_{n}\le\frac{1}{2}\Delta_{n-2}$ and $\Delta_{n}<\Delta_{n-1}$.

In~\cite{KT-jams} it was shown that for any diffeomorphism $T\in C^{2+\alpha}(\unitcircle)$, $T'>0$,
$\alpha\in[0,1]$, with irrational rotation number the following Denjoy-type inequality takes place:
\begin{equation}\label{eq:Denjoy:2+alpha}
(T^{q_n})'(\xi)=1+\Obig(\eps_{n,\alpha}),\quad\text{where}\quad
\eps_{n,\alpha}=l_{n-1}^\alpha+\frac{l_n}{l_{n-1}}l_{n-2}^\alpha+\frac{l_n}{l_{n-2}}l_{n-3}^\alpha+\dots+\frac{l_n}{l_0}
\end{equation}
and $l_m=\max_{\xi\in\unitcircle}|\Delta_m(\xi)|$. Notice, that this estimate does not require any Diophantine
conditions on $\rho(T)$.

Unfortunately, it is not possible to write down a corresponding stronger estimate for $T\in
C^{3+\beta}(\unitcircle)$, $\beta\in[0,1]$, without additional assumptions. We will assume that the conjugacy is
at least $C^1$-smooth: $\phi\in C^{1+\gamma}(\unitcircle)$, $\phi'>0$, with some $\gamma\in[0,1]$. (Notice, that
in conditions of Theorem~1 this assumption holds true with $\gamma=1-\delta$ accordingly to~\cite{KT-jams}, and
our aim is to raise the value of $\gamma$ to $1-\delta+\beta$.)


This assumption is equivalent to the following one: an invariant measure generated by $T$ has the positive density
$h=\phi'\in C^{\gamma}(\unitcircle)$. This density satisfies the homologic equation
\begin{equation}\label{eq:homol}
h(\xi)=T'(\xi)h(T\xi)
\end{equation}

The continuity of $h$ immediately implies that $h(\xi)\sim 1$, and
therefore $(T^i)'(\xi)=\frac{h(\xi)}{h(T^i\xi)}\sim 1$ and
$$
|\Delta^{(n)}(\xi)|\sim l_n\sim\Delta_n\sim\frac{1}{q_{n+1}}
$$
(due to $\Delta_n=\int_{\Delta^{(n)}(\xi)}h(\eta)\,d\eta$). By this reason, we introduce the notation
$$
E_{n,\sigma}=\sum_{k=0}^n\frac{\Delta_n}{\Delta_{n-k}}\Delta_{n-k-1}^\sigma,
$$
so that $\eps_{n,\alpha}$ in (\ref{eq:Denjoy:2+alpha}) can be replaced by $E_{n,\alpha}$ as soon as we know of the
existence of continuous $h$.

It follows also that $(T^i)'\in C^\gamma(\unitcircle)$ uniformly in $i\in\Z$, i.e.
\begin{equation}\label{eq:T^i'}
(T^i)'(\xi)-(T^i)'(\eta)=\Obig(|\xi-\eta|^\gamma),
\end{equation}
since
$(T^i)'\xi-(T^i)'\eta=\frac{h(\xi)}{h(T^i\xi)}-\frac{h(\eta)}{h(T^i\eta)}$
and $T^i\xi-T^i\eta\sim\xi-\eta$.

The additional smoothness of $T$ will be used through the following quantities:
$p_n=p_n(\xi_0)=\sum_{i=0}^{q_n-1}\frac{ST(\xi_i)}{h(\xi_i)}(\xi_i-\xi_{i+q_{n-1}})$,
$\bp_n=\bp_n(\xi_0)=\sum_{i=0}^{q_{n-1}-1}\frac{ST(\xi_{i+q_n})}{h(\xi_{i+q_n})}(\xi_{i+q_n}-\xi_{i})$. We have
\begin{equation}\label{eq:p+bp}
p_n+\bp_n=\sum_{\xi\in\Xi_n}ST(\hat\xi)\frac{\hat\xi-\xi}{h(\hat\xi)},
\end{equation}
where $\hat\xi$ denotes the point from the set $\Xi_n$ following
$\xi$ in the (circular) order
$\dots\to\xi_{q_{n-1}}\to\xi_0\to\xi_{q_n}\to\dots$. It is easy to
see that $N_n(\xi_i)=\xi_{i+q_n}$ for $0\le i<q_{n-1}$ and
$N_n(\xi_i)=\xi_{i-q_{n-1}}$ for $q_{n-1}\le i<q_n+q_{n-1}$.

In the next two subsections, we will establish certain dependencies between the Denjoy-type estimates in the
forms $(T^{q_n})'(\xi)=1+\Obig(\Delta_n^\nu)$ and $(T^{q_n})'(\xi)=1+\Obig(E_{n,\sigma})$.

\subsection{Statements that use the Hoelder exponents of $T'''$ and $h$}

In all the statements of this subsection, we assume that $T\in C^{3+\beta}$ and $h\in C^{\gamma}$,
$\beta,\gamma\in[0,1]$, but do not make any use of Diophantine properties of $\rho$.

The next lemma corresponds to the exact integral relation $\int_\unitcircle\frac{ST(\xi)}{h(\xi)}d\xi$ first
demonstrated in~\cite{SK}.
\begin{lemma}\label{lemma:T->p}
If $(T^{q_n})'(\xi)=1+\Obig(\Delta_n^\nu)$, then $p_n+\bp_n=\Obig(\Delta_{n-1}^{\min\{\beta,2\nu-1\}})$.
\end{lemma}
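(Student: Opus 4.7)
The plan is to use equation~(\ref{eq:p+bp}) to interpret $p_n+\bp_n$ as a right-endpoint Riemann sum of the function $g(\xi):=ST(\xi)/h(\xi)$ over the dynamical partition $\Pcal_n$, whose cells have diameter $\sim\Delta_{n-1}$. The proof then reduces to two tasks: establishing the exact identity $I:=\int_\unitcircle g(\xi)\,d\xi=0$, and bounding the Riemann-sum error by $\Obig(\Delta_{n-1}^{\min\{\beta,2\nu-1\}})$.

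For $I=0$, which is the Sinai--Khanin identity alluded to just before the lemma, I would invoke the Schwarzian cocycle $ST(\xi)=S\phi(\xi)-S\phi(T\xi)\,T'(\xi)^2$, a consequence of $\phi\circ T=R_\rho\circ\phi$ and $SR_\rho\equiv0$. Dividing by $h$ and applying the homologic equation (\ref{eq:homol}) in the form $T'(\xi)/h(\xi)=1/h(T\xi)$ rewrites $g$ as a formal coboundary
$$
g(\xi)=G(\xi)-G(T\xi)\,T'(\xi),\qquad G:=\frac{S\phi}{h}.
$$
Integrating and changing variables $\eta=T\xi$ in the second term makes the two contributions cancel, giving $I=0$. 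Since $\phi$ is only known to be $C^{1+\gamma}$ a priori, a mollification argument is needed to make $G$ classically meaningful.

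For the Riemann-sum error, I would write
$$
p_n+\bp_n=\sum_{\xi\in\Xi_n}\int_\xi^{\hat\xi}\bigl(g(\hat\xi)-g(\eta)\bigr)\,d\eta
$$
and control each integrand by the modulus of continuity of $g$. Two H\"older exponents come into play: from $T\in C^{3+\beta}$ we have $ST\in C^\beta$ directly, whereas the hypothesis $(T^{q_n})'(\xi)=1+\Obig(\Delta_n^\nu)$, combined with $h(\xi)/h(T^{q_n}\xi)=(T^{q_n})'(\xi)$, yields $|h(\xi)-h(T^{q_n}\xi)|=\Obig(\Delta_n^\nu)$ along orbit steps of length $\sim\Delta_n$. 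This orbitwise estimate must then be promoted to a uniform H\"older bound $h\in C^{2\nu-1}$ by chaining comparisons across successive scales of the dynamical partition, using the exponential decay (\ref{eq:root of two decay}). Combining the two regularity inputs gives $g\in C^{\min\{\beta,2\nu-1\}}$, whence the Riemann-sum error is $\Obig(\Delta_{n-1}^{\min\{\beta,2\nu-1\}})$. I expect the main obstacle to be this bootstrap: the loss of one power in passing from $\nu$ down to $2\nu-1$ reflects the combinatorial cost of stringing together orbit-step comparisons to reach arbitrary nearby points of $\unitcircle$.
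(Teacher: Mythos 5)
Your overall strategy (Riemann sum of $g=ST/h$ over $\Pcal_n$, plus the Sinai--Khanin identity $\int_\unitcircle ST/h\,d\xi=0$) is a genuinely different route from the paper's, but as written it has two real gaps. First, your derivation of $\int_\unitcircle g\,d\xi=0$ rests on writing $g(\xi)=G(\xi)-G(T\xi)T'(\xi)$ with $G=S\phi/h$, which requires three derivatives of $\phi$; a priori $\phi$ is only $C^{1+\gamma}$, so $S\phi$ does not exist, and ``a mollification argument is needed'' is not a proof --- it is precisely the hard point. There is no obvious way to mollify $\phi$ while preserving both the conjugacy equation and the coboundary structure, and the paper's proof is designed to avoid ever needing this integral identity: it works with the exact discrete sum (\ref{eq:p+bp}), applies two summations by parts (one across the partition, one along the dynamics via $T'(\xi)=h(\xi)/h(T\xi)$), and lands on the explicit telescoping expression (\ref{eq:prop:T->p:3}) whose terms are controlled directly.

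Second, your promotion of the orbit-step estimate $|h(\xi)-h(T^{q_n}\xi)|=\Obig(\Delta_n^\nu)$ to a uniform bound $h\in C^{2\nu-1}(\unitcircle)$ by ``chaining across scales'' fails without Diophantine hypotheses. Chaining from $\xi_0$ to a point at distance $\sim k\Delta_n$, $k\le k_{n+1}$, costs $\Obig(k\Delta_n^\nu)$, and $k\Delta_n^\nu\le C(k\Delta_n)^{2\nu-1}$ forces $k=\Obig(\Delta_n^{-1/2})$; since $k_{n+1}\sim\Delta_{n-1}/\Delta_n$ can be arbitrarily large for Liouville-type $\rho$, the bound is false in general. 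This matters structurally: Lemma~\ref{lemma:T->p} sits in the subsection that explicitly makes no use of Diophantine properties of $\rho$ (the passage from orbit-step control of $h$ to a genuine H\"older exponent is exactly Lemma~\ref{lemma:T->h}, which lives in the Diophantine subsection). The paper's proof needs only the single-step estimate (\ref{eq:h(xi)-h(hatxi)}) for the neighbour $\hat\xi=N_n\xi$, which is an iterate $T^{q_n}\xi$ or $T^{-q_{n-1}}\xi$ and hence is controlled directly by the hypothesis with no chaining; the exponent $2\nu-1$ then appears not as a chaining loss but from the squared differences $\bigl(h(\xi)/h(\hat\xi)-1\bigr)^2=\Obig(|\hat\xi-\xi|^{2\nu})$ divided by $\hat\xi-\xi$ in (\ref{eq:prop:T->p:3})--(\ref{eq:prop:T->p:4}). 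To salvage your approach you would need both a rigorous proof of the integral identity under the stated regularity and either the Diophantine hypothesis or a replacement for the H\"older bound on $h$; neither is supplied.
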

\begin{proof}
Using the representation
$ST=\left(\frac{T''}{T'}\right)'-\frac{1}{2}\left(\frac{T''}{T'}\right)^2$,
from (\ref{eq:p+bp}) we derive
\begin{multline*}
p_n+\bp_n=\sum_{\xi\in\Xi_n}\left[\left(\frac{T''(\hat\xi)}{T'(\hat\xi)}-\frac{T''(\xi)}{T'(\xi)}\right)
\frac{1}{h(\hat\xi)}+\Obig(|\hat\xi-\xi|^{1+\beta})\right]\\
-\frac{1}{2}\sum_{\xi\in\Xi_n}\left(\frac{T''(\xi)}{T'(\xi)}\right)^2\frac{\hat\xi-\xi}{h(\hat\xi)}
\\
=\sum_{\xi\in\Xi_n}\frac{T''(\xi)}{T'(\xi)}\left[\frac{1}{h(\xi)}-\frac{1}{h(\hat\xi)}
-\frac{1}{2}\frac{T''(\xi)}{T'(\xi)}\frac{\hat\xi-\xi}{h(\hat\xi)}\right]+\Obig(\Delta_{n-1}^\beta)
\end{multline*}
Notice that
\begin{equation}\label{eq:h(xi)-h(hatxi)}
h(\xi)-h(\hat\xi)=\Obig(|\hat\xi-\xi|^\nu)
\end{equation}
due to (\ref{eq:homol}). In particular, (\ref{eq:h(xi)-h(hatxi)}) implies that the expression in the last square
brackets is $\Obig(|\hat\xi-\xi|^\gamma)$, hence using the estimate
$T''(\xi)=\frac{T'(\hat\xi)-T'(\xi)}{\hat\xi-\xi}+\Obig(\hat\xi-\xi)$ we get
$$
p_n+\bp_n =\sum_{\xi\in\Xi_n}\left(\frac{T'(\hat\xi)}{T'(\xi)}-1\right)\frac{1}{\hat\xi-\xi}
\left[\frac{1}{h(\xi)}-\frac{1}{h(\hat\xi)}
-\frac{1}{2}\left(\frac{T'(\hat\xi)}{T'(\xi)}-1\right)\frac{1}{h(\hat\xi)}\right]
+\Obig(\Delta_{n-1}^{\min\{\beta,\nu\}})
$$
Now, the substitutions $T'(\xi)=\frac{h(\xi)}{h(T\xi)}$ and
$T'(\hat\xi)=\frac{h(\hat\xi)}{h(T\hat\xi)}$ transform the last
estimate (exactly) into
\begin{equation}\label{eq:prop:T->p:3}
p_n+\bp_n=\frac{1}{2}\sum_{\xi\in\Xi_n}\frac{h(\hat\xi)}{(h(\xi))^2(\hat\xi-\xi)}
\left[\left(\frac{h(\xi)}{h(\hat\xi)}-1\right)^2-\left(\frac{h(T\xi)}{h(T\hat\xi)}-1\right)^2\right]
+\Obig(\Delta_{n-1}^{\min\{\beta,\nu\}})
\end{equation}
Similarly to (\ref{eq:h(xi)-h(hatxi)}), each one of two expressions in parentheses here are
$\Obig(|\hat\xi-\xi|^\nu)$. It follows, firstly, that
\begin{multline}\label{eq:prop:T->p:4}
p_n+\bp_n=\frac{1}{2}\sum_{\xi\in\Xi_n}
\left(\frac{h(T\xi)}{h(T\hat\xi)}-1\right)^2
\left[\frac{h(T\hat\xi)}{(h(T\xi))^2(T\hat\xi-T\xi)}-\frac{h(\hat\xi)}{(h(\xi))^2(\hat\xi-\xi)}\right]\\
+\Obig(\Delta_{n-1}^{\min\{\beta,2\nu-1\}}),
\end{multline}
since, as it is easy to see, the sums in (\ref{eq:prop:T->p:3}) and in (\ref{eq:prop:T->p:4}) differ by a finite
number of terms of the order $\Obig(|\hat\xi-\xi|^{2\nu-1})$, and $2\nu-1\le\nu$. Secondly, we have
$$
\frac{h(T\hat\xi)}{(h(T\xi))^2(T\hat\xi-T\xi)}:\frac{h(\hat\xi)}{(h(\xi))^2(\hat\xi-\xi)}-1
=\frac{T'(\xi)}{T'(\hat\xi)}\cdot
\left(T'(\xi):\frac{T\hat\xi-T\xi}{\hat\xi-\xi}\right)-1=\Obig(\hat\xi-\xi),
$$
so the expressions in the square brackets in (\ref{eq:prop:T->p:4}) are bounded, and therefore the whole sum in
it is $\sum_{\xi\in\Xi_n}\Obig(|\hat\xi-\xi|^{2\nu})=\Obig(\Delta_{n-1}^{2\nu-1})$.
\end{proof}

Notice, that Lemma~\ref{lemma:T->p} does not use $\gamma$. However, the next one does.

\begin{lemma}\label{lemma:prop:T->p}
If $(T^{q_n})'(\xi)=1+\Obig(\Delta_n^\nu)$, then $p_n=\Obig(\Delta_{n-1}^{\min\{\beta,2\nu-1,\gamma\}})$.
\end{lemma}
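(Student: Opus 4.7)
My plan is to deduce the improved bound on $p_n$ by combining Lemma~\ref{lemma:T->p} with a separate, direct estimate of $\bp_n$. Since Lemma~\ref{lemma:T->p} gives $p_n = -\bp_n + \Obig(\Delta_{n-1}^{\min\{\beta,2\nu-1\}})$, it suffices to prove $\bp_n = \Obig(\Delta_{n-1}^{\min\{\beta,\nu,\gamma\}})$; the desired conclusion then follows since $2\nu-1\le\nu$.

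To estimate $\bp_n=\sum_{i=0}^{q_{n-1}-1}\frac{ST(\xi_{i+q_n})}{h(\xi_{i+q_n})}(\xi_{i+q_n}-\xi_i)$, I would expand each factor relative to the base point $\xi_i$. The Hoelder regularity $ST\in C^\beta$ gives $ST(\xi_{i+q_n})=ST(\xi_i)+\Obig(\Delta_n^\beta)$; the homologic equation (\ref{eq:homol}) together with the hypothesis $(T^{q_n})'(\xi_i)=1+\Obig(\Delta_n^\nu)$ produces $1/h(\xi_{i+q_n})=1/h(\xi_i)+\Obig(\Delta_n^\nu)$; and the invariant-measure identity $\int_{\xi_i}^{\xi_{i+q_n}}h\,d\eta=(-1)^n\Delta_n$ combined with $h\in C^\gamma$ yields $\xi_{i+q_n}-\xi_i=(-1)^n\Delta_n/h(\xi_i)+\Obig(\Delta_n^{1+\gamma})$. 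Multiplying these three expansions, summing over $i$, and absorbing all cross-errors using $q_{n-1}\Delta_n\le1$, one arrives at
\[
\bp_n = (-1)^n\Delta_n\sum_{i=0}^{q_{n-1}-1}\frac{ST(\xi_i)}{h(\xi_i)^2} + \Obig(\Delta_{n-1}^{\min\{\beta,\nu,\gamma\}}).
\]

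The remaining weighted Birkhoff sum is controlled as follows. Setting $G=ST/h^2$ and using the $T$-invariant measure $d\mu=h\,d\xi$, one has $\int G\,d\mu=\int_\unitcircle ST/h\,d\xi=0$ by the Sinai-Khanin relation from~\cite{SK} (which can alternatively be recovered by letting $n\to\infty$ in Lemma~\ref{lemma:T->p} and using the Riemann-sum interpretation of $p_n+\bp_n$). Conjugating by $\phi$ turns the sum into a Birkhoff sum along a rotation orbit of the zero-mean function $\tilde G=G\circ\phi^{-1}\in C^{\min\{\beta,\gamma\}}$; a Denjoy-Koksma estimate for Hoelder observables (mollify $\tilde G$ at scale $\epsilon\sim1/q_{n-1}$ and balance the two errors) gives $|\sum_{i<q_{n-1}}G(\xi_i)|=\Obig(q_{n-1}^{1-\min\{\beta,\gamma\}})$. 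Multiplying by $\Delta_n$ and using $q_{n-1}\le q_n\sim1/\Delta_{n-1}$ together with $\Delta_n\le\Delta_{n-1}$, the main term becomes $\Obig(\Delta_{n-1}^{\min\{\beta,\gamma\}})$. Hence $\bp_n=\Obig(\Delta_{n-1}^{\min\{\beta,\nu,\gamma\}})$, and combining with Lemma~\ref{lemma:T->p} yields the claim. The main obstacle is this last step: producing a quantitative Denjoy-Koksma-type bound for merely Hoelder observables with no Diophantine hypothesis on $\rho$, and cleanly invoking the Sinai-Khanin zero-mean identity in the present setting.
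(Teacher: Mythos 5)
Your argument is correct, but it takes a genuinely different route from the paper. The paper never estimates $\bp_n$ in isolation: using (\ref{eq:T^i'}) in the form (\ref{eq:prop:T->p}), together with (\ref{eq:h(xi)-h(hatxi)}) and $ST(\xi_{i+q_n})-ST(\xi_i)=\Obig(\Delta_n^\beta)$, it matches the terms of $\bp_n$ with those of $p_{n-1}$ to get $\bp_n+\frac{|\Delta_0^{(n)}|}{|\Delta_0^{(n-2)}|}p_{n-1}=\Obig(\Delta_n^{\min\{\beta,\gamma,\nu\}})$, combines this with Lemma~\ref{lemma:T->p} to obtain the contractive recursion $p_n=\frac{|\Delta_0^{(n)}|}{|\Delta_0^{(n-2)}|}p_{n-1}+\Obig(\Delta_{n-1}^{\mu})$ with $\mu=\min\{\beta,2\nu-1,\gamma\}$, and telescopes using (\ref{eq:root of two decay}); this is entirely self-contained. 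You instead approximate $\bp_n$ by $(-1)^n\Delta_n$ times a Birkhoff sum of the observable $ST/h$ against the invariant measure and control that sum by a H\"older Denjoy--Koksma bound; your term-by-term expansions and the error bookkeeping are fine, and the final exponent comes out right. The two external ingredients you flag as the main obstacle are both legitimately available: the bound $\bigl|\sum_{i<q}\tilde G(x+i\rho)\bigr|=\Obig(q^{1-\theta})$ for a zero-mean $\tilde G\in C^\theta$ holds at every convergent denominator $q$ with no Diophantine hypothesis (either by your mollification-plus-classical-Denjoy--Koksma scheme, or by replacing $i\rho$ with $ip_{n-1}/q_{n-1}$ at cost $\Obig(q\Delta_{n-1}^\theta)$ and using the equally spaced Riemann sum), and $\int_\unitcircle ST(\xi)/h(\xi)\,d\xi=0$ is precisely the Sinai--Khanin relation the paper cites just before Lemma~\ref{lemma:T->p}, recoverable from that lemma by letting $n\to\infty$ whenever $\min\{\beta,2\nu-1\}>0$ (and when $2\nu-1\le0$ the claimed conclusion is vacuous anyway, since $p_n=\Obig(1)$ trivially because the arcs $\Delta_i^{(n-1)}$ are disjoint). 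What your route buys is a direct, conceptually transparent bound on $\bp_n$ itself; what it costs is the import of the zero-mean identity and the H\"older ergodic-sum estimate, both of which the paper's recursion-and-telescope argument avoids.
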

\begin{proof}
It follows from (\ref{eq:T^i'}) that
\begin{equation}\label{eq:prop:T->p}
\frac{|\Delta_i^{(n)}|}{|\Delta_0^{(n)}|}:\frac{|\Delta_i^{(n-2)}|}{|\Delta_0^{(n-2)}|}=1+\Obig(\Delta_{n-2}^\gamma)
\end{equation}
This implies, together with (\ref{eq:h(xi)-h(hatxi)}) and $ST(\xi_{i+q_n})-ST(\xi_i)=\Obig(\Delta_n^{\beta})$,
that
$$
\bp_n+\frac{|\Delta_0^{(n)}|}{|\Delta_0^{(n-2)}|}p_{n-1}
=\sum_{i=0}^{q_{n-1}-1}\Obig(\Delta_n(\Delta_{n-2}^\gamma+\Delta_n^\beta+\Delta_n^\nu))
=\frac{\Delta_n}{\Delta_{n-2}}\Obig(\Delta_{n-2}^{\min\{\beta,\gamma,\nu\}})=\Obig(\Delta_{n}^{\min\{\beta,\gamma,\nu\}})
$$
In view of this, Lemma~\ref{lemma:T->p} implies
$p_n=\frac{|\Delta_0^{(n)}|}{|\Delta_0^{(n-2)}|}p_{n-1}+\Obig(\Delta_{n-1}^{\mu})$, where
$\mu=\min\{\beta,2\nu-1,\gamma\}\le1$. Telescoping the last estimate, we get
$$
p_n=\sum_{k=0}^n\frac{|\Delta_0^{(n)}|\cdot|\Delta_0^{(n-1)}|}{|\Delta_0^{(n-k)}|\cdot|\Delta_0^{(n-k-1)}|}
\Obig(\Delta_{n-k-1}^\mu) =\Obig\left(\Delta_{n-1}^\mu
\sum_{k=0}^n\frac{\Delta_n}{\Delta_{n-k}}\left(\frac{\Delta_{n-1}}{\Delta_{n-k-1}}\right)^{1-\mu}\right),
$$
and the latter sum is bounded due to (\ref{eq:root of two decay}).
\end{proof}

\begin{lemma}\label{lemma:Dist-T^qn}
If $p_n=\Obig(\Delta_{n-1}^\omega)$, where $\omega\in[0,1]$, then
\begin{gather*}
\Dist(\xi_0,\xi,\xi_{q_{n-1}},\eta;T^{q_n})=1+(\xi-\eta)\Obig(\Delta_{n-1}^{\min\{\beta,\gamma,\omega\}}),
\quad \xi,\eta\in\Delta^{(n-1)}_0;\\
\Dist(\xi_0,\xi,\xi_{q_n},\eta;T^{q_{n-1}})=1+(\xi-\eta)\frac{\Delta_n}{\Delta_{n-2}}
\Obig(\Delta_{n-2}^{\min\{\beta,\gamma,\omega\}}), \quad \xi,\eta\in\Delta^{(n-2)}_0
\end{gather*}
\end{lemma}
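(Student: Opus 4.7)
The plan is to decompose the left-hand side into single-step pieces via multiplicativity (\ref{eq:mult-Dist}), apply a sharpened form of Proposition~\ref{prop:Dist} termwise, and identify the leading sum against the definition of $p_n$.

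In log form, (\ref{eq:mult-Dist}) gives
$$\log\Dist(\xi_0,\xi,\xi_{q_{n-1}},\eta;T^{q_n})=\sum_{i=0}^{q_n-1}\log\Dist(\xi_i,T^i\xi,\xi_{i+q_{n-1}},T^i\eta;T).$$
To each summand I want to apply (\ref{eq:log-Dist}) with $\theta=\xi_i$, but in a form that carries an extra factor $(y_2-y_4)$ in the error:
$$\log\Dist(y_1,y_2,y_3,y_4;T)=(y_1-y_3)(y_2-y_4)\bigl[\tfrac16 ST(\theta)+\Obig(\Delta^\beta)\bigr].$$
This refinement is available because $\log\Dist$ vanishes identically when $y_2=y_4$; it can be read off, for instance, from the identity $\log\Dist(y_1,y_2,y_3,y_4;f)=\int_{y_4}^{y_2}\!\!\int_{y_3}^{y_1}\partial_a\partial_b L(a,b)\,da\,db$ with $L(a,b)=\log\bigl((f(a)-f(b))/(a-b)\bigr)$, combined with the Taylor expansion at the midpoint giving $\partial_a\partial_b L(a,b)=\tfrac{1}{6}Sf((a{+}b)/2)+\Obig(|a-b|^\beta)$.

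Each summand then has the form $(\xi_i-\xi_{i+q_{n-1}})(T^i\xi-T^i\eta)\bigl[\tfrac16 ST(\xi_i)+\Obig(|\Delta_i^{(n-1)}|^\beta)\bigr]$. Using the mean value theorem, the identity $(T^i)'(\cdot)=h(\cdot)/h(T^i\cdot)$, and the uniform $\gamma$-Hoelder bound (\ref{eq:T^i'}), I would replace $T^i\xi-T^i\eta$ by $(h(\xi_0)/h(\xi_i))(\xi-\eta)$ up to an $\Obig(\Delta_{n-1}^\gamma)(\xi-\eta)$ error. After pulling out $(\xi-\eta)$, the principal sum becomes $\tfrac16 h(\xi_0)\sum_{i=0}^{q_n-1}\tfrac{ST(\xi_i)}{h(\xi_i)}(\xi_i-\xi_{i+q_{n-1}})=\pm\tfrac16 h(\xi_0)p_n$, which is $(\xi-\eta)\Obig(\Delta_{n-1}^\omega)$ by hypothesis. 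The $\gamma$-error sums to $(\xi-\eta)\Obig(\Delta_{n-1}^\gamma)\sum_i|\Delta_i^{(n-1)}|=(\xi-\eta)\Obig(\Delta_{n-1}^\gamma)$ using the partition bound $\sum_i|\Delta_i^{(n-1)}|\le1$, and the $\beta$-error to $(\xi-\eta)\sum_i|\Delta_i^{(n-1)}|^{1+\beta}=(\xi-\eta)\Obig(\Delta_{n-1}^\beta)$. Combining these three exponents gives the first estimate.

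For the second estimate the scheme is identical, only now with $q_{n-1}$ summands and intervals $\Delta_i^{(n)}$ of size $\sim\Delta_n$. The summation factor $\sum_{i=0}^{q_{n-1}-1}|\Delta_i^{(n)}|\sim q_{n-1}\Delta_n\sim\Delta_n/\Delta_{n-2}$ is what produces the leading $\Delta_n/\Delta_{n-2}$ prefactor in every error contribution. The principal sum is close to $\pm\bar p_n$, and the intermediate identity $\bar p_n+(\Delta_n/\Delta_{n-2})p_{n-1}=\Obig(\Delta_n^{\min\{\beta,\gamma\}})$ (already derived inside the proof of Lemma~\ref{lemma:prop:T->p}) together with the hypothesis applied at level $n{-}1$ gives $\bar p_n=(\Delta_n/\Delta_{n-2})\Obig(\Delta_{n-2}^\omega)$, as required. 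The main technical obstacle is exactly the error refinement of Proposition~\ref{prop:Dist}: its bare statement $(x_1-x_3)\Obig(\Delta^{1+\beta})$ does not manifestly vanish at $\xi=\eta$ and therefore, if naively summed, yields only an absolute $\Obig(\Delta_{n-1}^{2+\beta})$ bound rather than the $(\xi-\eta)$-proportional one; once the missing $(y_2-y_4)$ factor is extracted via the double-integral representation above, the rest is routine telescoping.
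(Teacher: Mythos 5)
Your argument follows the same route as the paper's: decompose $\log\Dist$ into one-step terms via (\ref{eq:mult-Dist}), apply the Schwarzian cross-ratio estimate to each term with $\theta=\xi_i$, identify the principal sum with $h(\xi_0)(\xi-\eta)p_n$ using $(T^i)'(\xi_0)=h(\xi_0)/h(\xi_i)$, and control the replacement of $(T^i\xi-T^i\eta)/(\xi-\eta)$ by $(T^i)'(\xi_0)$ through (\ref{eq:T^i'}). The one substantive addition is your refinement of Proposition~\ref{prop:Dist} to the form $\log\Dist(y_1,y_2,y_3,y_4;f)=(y_1-y_3)(y_2-y_4)\bigl(\frac16 Sf(\theta)+\Obig(\Delta^{\beta})\bigr)$, and you are right that it is needed: the bare error $(x_1-x_3)\Obig(\Delta^{1+\beta})$ only sums to an absolute $\Obig(\Delta_{n-1}^{1+\beta})$, which does not majorize $(\xi-\eta)\Obig(\Delta_{n-1}^{\beta})$ when $|\xi-\eta|\ll\Delta_{n-1}$. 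The paper writes the $(\xi-\eta)$-proportional error without comment, so your double-integral representation with the kernel identity $\partial_a\partial_b\log\frac{f(a)-f(b)}{a-b}=\frac16 Sf\bigl(\frac{a+b}{2}\bigr)+\Obig(|a-b|^{\beta})$ supplies a justification the paper leaves implicit (and it checks out for $f\in C^{3+\beta}$). For the second estimate the paper compares the sum directly with $h(\xi_0)(\xi-\eta)\frac{|\Delta_0^{(n)}|}{|\Delta_0^{(n-2)}|}p_{n-1}$ via (\ref{eq:prop:T->p}), whereas you pass through $\bp_n$; the two are equivalent, but note one inaccuracy: to conclude $\bp_n=\frac{\Delta_n}{\Delta_{n-2}}\Obig(\Delta_{n-2}^{\min\{\beta,\gamma,\omega\}})$ you must quote the intermediate form $\bp_n+\frac{|\Delta_0^{(n)}|}{|\Delta_0^{(n-2)}|}p_{n-1}=\frac{\Delta_n}{\Delta_{n-2}}\Obig(\Delta_{n-2}^{\min\{\beta,\gamma\}})$ appearing inside the proof of Lemma~\ref{lemma:prop:T->p}, not the weakened $\Obig(\Delta_n^{\min\{\beta,\gamma\}})$ you state, since the latter exceeds the target by a factor $(\Delta_{n-2}/\Delta_n)^{1-\min\{\beta,\gamma\}}$, which is unbounded for unbounded partial quotients. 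With that substitution (and noting that the $\Delta_n^{\nu}$ term of that identity becomes $\Delta_n^{\gamma}$ here, since Lemma~\ref{lemma:Dist-T^qn} assumes only $h\in C^{\gamma}$), your argument is complete.
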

\begin{proof}
Accordingly to (\ref{eq:log-Dist}) and (\ref{eq:mult-Dist}), we
have
$$
\log\Dist(\xi_0,\xi,\xi_{q_{n-1}},\eta;T^{q_n})=\frac{1}{6}\sum_{i=0}^{q_n-1}(\xi_i-\xi_{i+q_{n-1}})(T^i\xi-T^i\eta)
ST(\xi_i)+(\xi-\eta)\Obig(\Delta_{n-1}^\beta)
$$
On the other hand,
\begin{multline*}
\sum_{i=0}^{q_n-1}(\xi_i-\xi_{i+q_{n-1}})(T^i\xi-T^i\eta)ST(\xi_i)-h(\xi_0)(\xi-\eta)p_n\\
=(\xi-\eta)\sum_{i=0}^{q_n-1}(\xi_i-\xi_{i+q_{n-1}})ST(\xi_i)\left[\frac{T^i\xi-T^i\eta}{\xi-\eta}
-(T^i)'(\xi_0)\right] =(\xi-\eta)\Obig(\Delta_{n-1}^\gamma)
\end{multline*}
because of (\ref{eq:T^i'}). The first estimate of the lemma
follows. To prove the second one, we similarly notice that
$$
\log\Dist(\xi_0,\xi,\xi_{q_{n}},\eta;T^{q_{n-1}})=\frac{1}{6}\sum_{i=0}^{q_{n-1}-1}(\xi_i-\xi_{i+q_{n}})(T^i\xi-T^i\eta)
ST(\xi_i)+(\xi-\eta)\Obig(\Delta_{n-1}^\beta)
$$
and
\begin{multline*}
\sum_{i=0}^{q_{n-1}-1}(\xi_i-\xi_{i+q_{n}})(T^i\xi-T^i\eta)ST(\xi_i)
-h(\xi_0)(\xi-\eta)\frac{|\Delta_0^{(n)}|}{|\Delta_0^{(n-2)}|}p_{n-1}\\
=(\xi-\eta)\sum_{i=0}^{q_{n-1}-1}(\xi_i-\xi_{i+q_{n}})ST(\xi_i)\left[\frac{T^i\xi-T^i\eta}{\xi-\eta}
-(T^i)'(\xi_0)\frac{|\Delta_i^{(n-2)}|}{|\Delta_0^{(n-2)}|}:\frac{|\Delta_i^{(n)}|}{|\Delta_0^{(n)}|}\right]\\
=(\xi-\eta)\sum_{i=0}^{q_{n-1}-1}(\xi_i-\xi_{i+q_{n}})ST(\xi_i)\Obig(\Delta_{n-2}^\gamma)
=(\xi-\eta)\frac{\Delta_n}{\Delta_{n-2}}\Obig(\Delta_{n-2}^\gamma)
\end{multline*}
(see (\ref{eq:prop:T->p})).
\end{proof}


As in~\cite{KT-jams}, we introduce the functions
$$
M_n(\xi)=\DR(\xi_0,\xi,\xi_{q_{n-1}};T^{q_n}),\quad \xi\in\Delta_0^{(n-1)};
$$
$$
K_n(\xi)=\DR(\xi_0,\xi,\xi_{q_{n}};T^{q_{n-1}}),\quad \xi\in\Delta_0^{(n-2)},
$$
where $\xi_0$ is arbitrarily fixed. The following three exact
relations can be easily checked:

\begin{equation}\label{eq:observ1}
M_n(\xi_0)\cdot M_n(\xi_{q_{n-1}})=K_n(\xi_0)\cdot K_n(\xi_{q_n}),
\end{equation}

\begin{equation}\label{eq:observ2}
K_{n+1}(\xi_{q_{n-1}})-1=\frac{|\Delta_0^{(n+1)}|}{|\Delta_0^{(n-1)}|}\left(M_n(\xi_{q_{n+1}})-1\right),
\end{equation}

\begin{equation}\label{eq:observ3}
\frac{(T^{q_{n+1}})'(\xi_0)}{M_{n+1}(\xi_0)}-1=\frac{|\Delta_0^{(n+1)}|}{|\Delta_0^{(n)}|}
\left(1-\frac{(T^{q_{n}})'(\xi_0)}{K_{n+1}(\xi_0)}\right)
\end{equation}

Also notice that
\begin{equation}\label{eq:M/M,K/K}
\frac{M_n(\xi)}{M_n(\eta)}=\Dist(\xi_0,\xi,\xi_{q_{n-1}},\eta;T^{q_n}),\qquad
\frac{K_n(\xi)}{K_n(\eta)}=\Dist(\xi_0,\xi,\xi_{q_{n}},\eta;T^{q_{n-1}})
\end{equation}

\begin{lemma}\label{lemma:prop:p->T}
If $p_n=\Obig(\Delta_{n-1}^\omega)$, $\omega\in[0,1]$, then
$(T^{q_n})'(\xi)=1+\Obig({E}_{n,1+{\min\{\beta,\gamma,\omega\}}})$.
\end{lemma}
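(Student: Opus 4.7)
My plan is to run an induction on $n$, using the exact relation (\ref{eq:observ3}) (with $n$ replaced by $n-1$) in the form
\begin{equation*}
(T^{q_n})'(\xi_0) - 1 = \bigl(M_n(\xi_0)-1\bigr) + M_n(\xi_0) \cdot \frac{|\Delta_0^{(n)}|}{|\Delta_0^{(n-1)}|}\left(1-\frac{(T^{q_{n-1}})'(\xi_0)}{K_n(\xi_0)}\right).
\end{equation*}
Set $\mu = \min\{\beta,\gamma,\omega\}$. Assuming inductively $(T^{q_{n-1}})'(\xi) - 1 = \Obig(E_{n-1,1+\mu})$ together with target bounds $M_n(\xi_0)-1 = \Obig(\Delta_{n-1}^{1+\mu})$ and $K_n(\xi_0)-1 = \Obig(\Delta_{n-1}^{1+\mu})$, the right-hand side is of order $\Obig(\Delta_{n-1}^{1+\mu}) + \Obig((\Delta_n/\Delta_{n-1}) E_{n-1,1+\mu}) = \Obig(E_{n,1+\mu})$, by the identity $E_{n,1+\mu} = \Delta_{n-1}^{1+\mu} + (\Delta_n/\Delta_{n-1}) E_{n-1,1+\mu}$. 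Since the base point $\xi_0$ is arbitrary, the statement is uniform over $\unitcircle$.

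The core task is therefore establishing the bounds on $M_n(\xi_0)-1$ and $K_n(\xi_0)-1$. I attack $M_n(\xi_0)$ by expanding $\log M_n(\xi_0)$ via multiplicativity (\ref{eq:mult-DR}) and Proposition~\ref{prop:DR}, applied at each iterate with $x_1=x_2=\xi_i$, $x_3=\xi_{i+q_{n-1}}$, $\theta=\xi_i$, yielding
\begin{equation*}
\log M_n(\xi_0) = \sum_i (\xi_i-\xi_{i+q_{n-1}})\frac{T''(\xi_i)}{2T'(\xi_i)} - \frac{1}{6}\sum_i(\xi_i-\xi_{i+q_{n-1}})^2 ST(\xi_i) + \Obig(\Delta_{n-1}^{1+\beta}).
\end{equation*}
The Schwarzian sum reduces via the approximation $|\Delta_i^{(n-1)}| = \Delta_{n-1}/h(\xi_i) + \Obig(\Delta_{n-1}^{1+\gamma})$ (a direct consequence of $h\in C^\gamma$ and $\int_{\Delta_i^{(n-1)}}h=\Delta_{n-1}$) to $\pm(\Delta_{n-1}/6)\, p_n + \Obig(\Delta_{n-1}^{1+\gamma})=\Obig(\Delta_{n-1}^{1+\mu})$, invoking the hypothesis $p_n=\Obig(\Delta_{n-1}^\omega)$. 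The $T''/T'$ sum I handle by Taylor-expanding $T'(\xi_{i+q_{n-1}})-T'(\xi_i)$ to second order, which rewrites it as $\tfrac{1}{2}\sum_i[1-T'(\xi_{i+q_{n-1}})/T'(\xi_i)]$ plus a $T'''/T'$-quadratic residue; the first piece collapses via the homological identity $T'=h/(h\circ T)$ to boundary $h$-values controlled by $\Obig(\Delta_{n-1}^\gamma)$ Hölder cancellation, while the residue merges with the Schwarzian machinery and is again controlled by $p_n$. A parallel expansion, using the second estimate of Lemma~\ref{lemma:Dist-T^qn} in place of the first, handles $K_n(\xi_0)-1$ with an additional factor $\Delta_n/\Delta_{n-2}$ that is already harmless after multiplication by $\Delta_n/\Delta_{n-1}$ in the main relation.

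The main obstacle I anticipate is the $T''/T'$-type sum: a naive term-by-term estimate only yields $\Obig(\Delta_{n-1}^\gamma)$, which is insufficient since $1+\mu \ge 1 \ge \gamma$. The improvement to $\Obig(\Delta_{n-1}^{1+\mu})$ requires recognising that after the Taylor expansion the leading contributions telescope exactly along the trajectory through the homological equation, so that the surviving pieces are a boundary term (which pairs against the length-ratio correction implicit in the definition of $M_n$) and a second-order $T'''$-remainder that, once decomposed through $T'''/T' = ST + \tfrac{3}{2}(T''/T')^2$, is absorbed by the $p_n$-bounded Schwarzian contribution. This is the delicate part of the argument; the rest is bookkeeping through the inductive relation for $E_{n,1+\mu}$ and the root-of-two decay (\ref{eq:root of two decay}).
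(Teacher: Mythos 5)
Your overall architecture differs from the paper's: you try to bound $M_n(\xi_0)-1$ and $K_n(\xi_0)-1$ \emph{directly} by $\Obig(\Delta_{n-1}^{1+\mu})$, $\mu=\min\{\beta,\gamma,\omega\}$, and then telescope (\ref{eq:observ3}) once. The paper never estimates $M_n$ or $K_n$ pointwise by direct summation; it only estimates the \emph{ratios} $M_n(\xi)/M_n(\eta)$ and $K_n(\xi)/K_n(\eta)$ (i.e.\ cross-ratio distortions, in which the $T''/T'$ contributions cancel identically), and then recovers the absolute normalization $m_n$ of $M_n,K_n$ from the exact relations (\ref{eq:observ1})--(\ref{eq:observ2}) by a \emph{second} telescoping, obtaining only $M_n(\xi)-1=\Obig(\Delta_{n-1}E_{n,\mu})$. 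This is not a cosmetic difference: your intermediate target is strictly stronger than what the paper proves, and it is almost certainly false in the generality required. The lemma lives in the subsection that uses no Diophantine condition, and when a partial quotient $k_n$ is large the term $\Delta_n\Delta_{n-2}^{\mu}$ occurring in $\Delta_{n-1}E_{n,\mu}$ exceeds $\Delta_{n-1}^{1+\mu}$ by an unbounded factor $\sim\Delta_{n-2}k_n^{\mu}$; the recursion (\ref{eq:observ1sub}) genuinely accumulates forcing from all previous scales, which is exactly why the conclusion of the lemma is the multi-scale quantity $E_{n,1+\mu}$ rather than $\Delta_n^{1+\mu}$ or $\Delta_{n-1}^{1+\mu}$.

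The concrete failure point is the one you flag yourself as ``delicate'': the sum $\sum_{i}(\xi_i-\xi_{i+q_{n-1}})\frac{T''(\xi_i)}{2T'(\xi_i)}$. After Taylor expansion and the homological substitution $T'=h/(h\circ T)$ this telescopes to boundary differences of $h$ along $\Delta^{(n-1)}$-intervals, which are only $\Obig(\Delta_{n-1}^{\gamma})$; and the ``pairing against the length-ratio correction implicit in $M_n$'' that you invoke to kill this term only cancels the leading $h$-values, leaving the Riemann-sum error of $|\Delta_i^{(n-1)}|=\Delta_{n-1}/h(\xi_i)\bigl(1+\Obig(\Delta_{n-1}^{\gamma})\bigr)$, again of size $\Obig(\Delta_{n-1}^{\gamma})$. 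Since at this stage of the induction one only has $h\in C^{\gamma}$ with $\gamma<1\le 1+\mu$, no amount of rearrangement of these terms produces $\Obig(\Delta_{n-1}^{1+\mu})$; improving the exponent past $\gamma$ would require exactly the regularity of $h$ that the whole argument is trying to establish. So the step ``the surviving pieces \dots are absorbed by the $p_n$-bounded Schwarzian contribution'' is not a proof but the missing idea; your treatment of the Schwarzian sum via $|\Delta_i^{(n-1)}|=\Delta_{n-1}/h(\xi_i)+\Obig(\Delta_{n-1}^{1+\gamma})$ and the hypothesis on $p_n$ is fine, but the first-order sum cannot be handled this way. To repair the argument you would need to abandon the pointwise bound on $M_n(\xi_0)-1$ and, as the paper does, work only with the cross-ratio distortions $M_n(\xi)/M_n(\eta)$, $K_n(\xi)/K_n(\eta)$ (Lemma~\ref{lemma:Dist-T^qn}) together with the exact identities (\ref{eq:observ1}) and (\ref{eq:observ2}) to pin down the common value $m_n$ across scales before ever touching $(T^{q_n})'$.
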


\begin{proof}
Let $\sigma=1+{\min\{\beta,\gamma,\omega\}}$. In view of (\ref{eq:M/M,K/K}), Lemma~\ref{lemma:Dist-T^qn} implies
that $M_n(\xi)/M_n(\eta)=1+\Obig(\Delta_{n-1}^{\sigma+1})$ and
$K_n(\xi)/K_n(\eta)=1+\Obig(\Delta_n\Delta_{n-2}^{\sigma})$. In our assumptions, the functions $M_n(\xi)\sim1$ and
$K_n(\xi)\sim1$, since $(T^i)'(\xi)\sim1$. This gives us
\begin{equation}\label{eq:observ1res}
M_n(\xi)=m_n+\Obig(\Delta_{n-1}^{\sigma+1}),\qquad K_n(\xi)=m_n+\Obig(\Delta_n\Delta_{n-2}^{\sigma})
\end{equation}
where $m_n^2$ denotes the products in (\ref{eq:observ1}). Due to (\ref{eq:observ2}) and (\ref{eq:observ1res}) we
have
\begin{equation}\label{eq:observ1sub}
m_{n+1}-1=\frac{|\Delta_0^{(n+1)}|}{|\Delta_0^{(n-1)}|}(m_n-1)+\Obig(\Delta_{n+1}\Delta_{n-1}^{\sigma}),
\end{equation}
which is telescoped into $m_n-1=\Obig(\Delta_n{E}_{n-1,\sigma-1})$, which in turn implies
\begin{equation}\label{eq:observ2res}
M_n(\xi)=1+\Obig(\Delta_{n-1}{E}_{n,\sigma-1}),\qquad K_n(\xi)=1+\Obig(\Delta_{n}{E}_{n-1,\sigma-1})
\end{equation}
(notice that $\Delta_{n-1}{E}_{n,\sigma-1}=\Delta_{n-1}^{1+\sigma}+\Delta_{n}{E}_{n-1,\sigma-1}$). Due to
(\ref{eq:observ2}) and (\ref{eq:observ2res}) we have
\begin{equation}\label{eq:observ3sub}
(T^{q_{n+1}})'(\xi_0)-1=\frac{|\Delta_0^{(n+1)}|}{|\Delta_0^{(n)}|}(1-(T^{q_{n}})'(\xi_0))
+\Obig(\Delta_{n}{E}_{n+1,\sigma-1})
\end{equation}
which is telescoped into
\begin{multline*}
(T^{q_{n}})'(\xi_0)-1=\Obig\left(\sum_{k=0}^n\frac{\Delta_n}{\Delta_{n-k}}\Delta_{n-k-1}{E}_{n-k,\sigma-1}\right)\\
=\Obig\left(\Delta_n\sum_{k=0}^n\sum_{m=0}^{n-k}\frac{\Delta_{n-k-1}}{\Delta_{n-k-m}}\Delta_{n-k-m-1}^\sigma\right)
=\Obig\left(\Delta_n\sum_{k=0}^n\sum_{s=k}^{n}\frac{\Delta_{n-k-1}}{\Delta_{n-s}}\Delta_{n-s-1}^\sigma\right)\\
=\Obig\left(\Delta_n\sum_{s=0}^n\frac{\Delta_{n-s-1}^\sigma}{\Delta_{n-s}}\sum_{k=0}^{s}\Delta_{n-k-1}\right)
=\Obig({E}_{n,\sigma}),
\end{multline*}
since $\sum_{k=0}^{s}\Delta_{n-k-1}=\Obig(\Delta_{n-s-1})$ due to (\ref{eq:root of two decay}).
\end{proof}

The summary of this subsection is given by
\begin{proposition}\label{prop:T->T}
Suppose that for a diffeomorphism $T\in C^{3+\beta}(\unitcircle)$, $T'>0$, $\beta\in[0,1]$, with irrational
rotation number there exists density $h\in C^\gamma(\unitcircle)$, $\gamma\in[0,1]$, of the invariant measure and
the following asymptotical estimate holds true: $(T^{q_n})'(\xi)=1+\Obig(\Delta_n^\nu)$ with certain real constant
$\nu$. Then $(T^{q_n})'(\xi)=1+\Obig(E_{n,1+\min\{\beta,\gamma,2\nu-1\}})$.
\end{proposition}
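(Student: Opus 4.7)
The plan is purely to concatenate the three lemmas of this subsection, which have been written precisely to form a chain. The proposition's statement is the closure of the loop
$$
(T^{q_n})'(\xi)-1 \;\leadsto\; p_n+\bp_n \;\leadsto\; p_n \;\leadsto\; (T^{q_n})'(\xi)-1,
$$
each arrow sharpening the exponent. No new estimate is needed.

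Concretely, I would first invoke Lemma~\ref{lemma:T->p} on the hypothesis $(T^{q_n})'(\xi)=1+\Obig(\Delta_n^\nu)$ to obtain
$$
p_n+\bp_n=\Obig(\Delta_{n-1}^{\min\{\beta,2\nu-1\}}).
$$
Then I would feed the same hypothesis into Lemma~\ref{lemma:prop:T->p}, which uses the H\"older exponent $\gamma$ of $h$ and the bound just displayed on $p_n+\bp_n$ (via the telescoping argument inside that lemma) to separate $p_n$ from its partner and produce
$$
p_n=\Obig(\Delta_{n-1}^{\min\{\beta,2\nu-1,\gamma\}}).
$$
Finally, I would apply Lemma~\ref{lemma:prop:p->T} with exponent $\omega:=\min\{\beta,2\nu-1,\gamma\}$, which yields
$$
(T^{q_n})'(\xi)=1+\Obig\bigl(E_{n,\,1+\min\{\beta,\gamma,\omega\}}\bigr).
$$

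The only thing left is the trivial algebraic identity $\min\{\beta,\gamma,\omega\}=\min\{\beta,\gamma,2\nu-1\}$, since $\omega$ is itself the minimum of $\beta$, $\gamma$ and $2\nu-1$; so $\beta$ and $\gamma$ inside the outer $\min$ are absorbed and what remains is exactly the exponent claimed in the proposition. There is no genuine obstacle: the three lemmas above were designed to be exactly the three links of this chain, and the proof is a short bookkeeping paragraph. The only thing worth double-checking is that each lemma is being applied in a regime where its hypothesis is actually implied by the output of the previous one, which holds automatically because the exponents only decrease along the chain.
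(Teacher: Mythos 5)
Your proposal is correct and is essentially identical to the paper's proof, which simply cites Lemma~\ref{lemma:prop:T->p} (itself built on Lemma~\ref{lemma:T->p}) followed by Lemma~\ref{lemma:prop:p->T}; the absorption of $\beta$ and $\gamma$ in the nested minimum is the same trivial bookkeeping step. No further comment is needed.
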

\begin{proof}
Follows from Lemmas~\ref{lemma:prop:T->p} and~\ref{lemma:prop:p->T} immediately.
\end{proof}

\begin{remark}
In~\cite{Y} it is shown that for any $T\in C^3(\unitcircle)$ the following Denjoy-type estimate takes place:
$(T^{q_n})'(\xi)=1+\Obig(l_n^{1/2})$, and in our assumptions it is equivalent to
$(T^{q_n})'(\xi)=1+\Obig(\Delta_n^{1/2})$. Hence, in fact we have $\nu\ge\frac{1}{2}$, though this is of no use
for us.
\end{remark}

\subsection{Statements that use Diophantine properties of $\rho$}

Now we start using the assumption $\rho\in D_\delta$, $\delta\ge0$, however forget about the smoothness of $T$ and
the Hoelder condition on $h$.

\begin{lemma}\label{lemma:T->h}
If $(T^{q_n})'(\xi)=1+\Obig(\Delta_n^{\nu})$, $\nu\in\left[\frac{\delta}{1+\delta},1\right]$, then $h\in
C^{\nu(1+\delta)-\delta}(\unitcircle)$.
\end{lemma}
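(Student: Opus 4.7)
The plan is to derive Hölder continuity of $h$ with exponent $\alpha := \nu(1+\delta) - \delta$ by telescoping the homologic equation (\ref{eq:homol}) along a $T^{q_n}$-orbit at a scale $n$ determined by $|\xi - \eta|$, then applying the Diophantine inequality (\ref{eq:Dioph}) to convert the scale-$n$ bound into a bound in terms of the separation itself.

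The starting ingredient is the one-step estimate $|h(\xi) - h(T^{q_n}\xi)| = \Obig(\Delta_n^\nu)$, which follows from $h(\xi)/h(T^{q_n}\xi) = (T^{q_n})'(\xi) = 1 + \Obig(\Delta_n^\nu)$ together with $h \sim 1$ (a preliminary consequence of the hypothesis via uniform convergence of the cocycle $\log(T^{q_n})'$, which forces $h$ to be a bounded positive continuous function). Given $\xi, \eta \in \unitcircle$ with $d := |\xi - \eta|$ small, pick $n$ with $\Delta_n \le d < \Delta_{n-1}$. By combinatorial equivalence with $R_\rho$, the iterates $\xi, T^{\pm q_n}\xi, \ldots, T^{\pm k_{n+1} q_n}\xi$ lie monotonically on one side of $\xi$ (selected by the sign, in the pattern of (\ref{eq:dyn-conv})) with successive gaps $\sim \Delta_n$ and cover a range $\sim k_{n+1}\Delta_n \sim \Delta_{n-1}$. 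Choosing the sign toward $\eta$ and $j_* \le d/\Delta_n + \Obig(1)$ yields $\xi_* := T^{\pm j_* q_n}\xi$ with $|\xi_* - \eta| \le \Delta_n$, and telescoping the one-step estimate gives
$$
|h(\xi) - h(\xi_*)| \le j_* \cdot \Obig(\Delta_n^\nu) = \Obig\bigl(d\, \Delta_n^{\nu - 1}\bigr).
$$

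The Diophantine relation $\Delta_{n-1}^{1+\delta} = \Obig(\Delta_n)$ combined with $d < \Delta_{n-1}$ gives $\Delta_n \ge c\, d^{1+\delta}$; since $\nu \le 1$, this converts the factor $\Delta_n^{\nu - 1}$ into $\Obig(d^{(1+\delta)(\nu - 1)})$, so $|h(\xi) - h(\xi_*)| = \Obig(d^\alpha)$. To absorb the residual $|h(\xi_*) - h(\eta)|$ with $|\xi_* - \eta| \le \Delta_n$, introduce the modulus of continuity $\omega(d) := \sup_{|\xi - \eta| \le d}|h(\xi) - h(\eta)|$; the preceding bound gives the recursion $\omega(d) \le C d^\alpha + \omega(\Delta_n)$, and iterating it across the scales $\Delta_n, \Delta_{n+1}, \ldots$ while using the geometric decay (\ref{eq:root of two decay}) and $\omega(\Delta_m) \to 0$ (continuity of $h$) produces $\omega(d) = \Obig(d^\alpha)$, i.e., $h \in C^\alpha(\unitcircle)$. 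The main obstacle is the combinatorial identification of $j_*$ and its sign: one must verify via (\ref{eq:dyn-conv}) that the sequence $\xi_0, \xi_{q_n}, \ldots, \xi_{k_{n+1}q_n}$ is monotonically ordered with gaps $\sim \Delta_n$ on the appropriate side of $\xi_0$, and handle (using $T^{\pm q_{n-1}}$-iterates) the case when $\eta$ lies on the opposite side of $\xi$.
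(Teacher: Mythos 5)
Your proposal is correct and follows essentially the same route as the paper: step from $\xi$ toward $\eta$ by $\Obig(d/\Delta_n)$ iterates of $T^{\pm q_n}$ at the scale $\Delta_n\le d<\Delta_{n-1}$, bound the accumulated error by $d\,\Delta_n^{\nu-1}=\Obig(d^{\nu(1+\delta)-\delta})$ via (\ref{eq:Dioph}), and absorb the sub-$\Delta_n$ residual by descending through finer scales with geometric decay (the paper writes this descent as the single sum $k\Delta_n^\nu+\sum_{s>n}k_{s+1}\Delta_s^\nu$ rather than as your recursion on the modulus of continuity, but the estimates are identical). The only slip is cosmetic: the opposite-side case is handled by negative iterates $T^{-jq_n}$ (whose derivative satisfies the same Denjoy estimate), not by $T^{\pm q_{n-1}}$-iterates, whose gap $\Delta_{n-1}$ already exceeds $d$.
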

\begin{proof}
Consider two points $\xi_0,\xi\in\unitcircle$ and $n\ge0$ such that
$\Delta_n\le|\phi(\xi)-\phi(\xi_0)|<\Delta_{n-1}$. Let $k$ be the greatest positive integer such that
$|\phi(\xi)-\phi(\xi_0)|\ge k\Delta_n$. (It follows that $1\le k\le k_{n+1}$.) Due to the combinatorics of
trajectories, continuity of $h$ and the homologic equation (\ref{eq:homol}), we have
$$
\log h(\xi)-\log h(\xi_0)=\Obig\left(k\Delta_n^{\nu} +\sum_{s=n+1}^{+\infty}k_{s+1}\Delta_s^{\nu}\right),
$$
and the same estimate holds for $h(\xi)-h(\xi_0)$, since $\log h(\xi)=\Obig(1)$.

We have $k_{n+1}<\Delta_{n-1}/\Delta_n=\Obig\bigl(\Delta_n^{-\frac{\delta}{1+\delta}}\bigr)$, hence
$$
k\Delta_n^\nu=k^{\nu(1+\delta)-\delta}\Delta_n^{\nu(1+\delta)-\delta} \cdot
k^{(1+\delta)(1-\nu)}\Delta_n^{\delta(1-\nu)}=\Obig\left((k\Delta_n)^{\nu(1+\delta)-\delta}\right)
$$
and
$$
\sum_{m=n+1}^{+\infty}k_{m+1}\eps_m
=\Obig\left(\sum_{m=n+1}^{+\infty}\Delta_{m}^{\frac{\nu(1+\delta)-\delta}{1+\delta}}\right)
=\Obig\left(\sum_{m=n+1}^{+\infty}\Delta_{m-1}^{\nu(1+\delta)-\delta}\right)
=\Obig\left(\Delta_{n}^{\nu(1+\delta)-\delta}\right)
$$
due to (\ref{eq:Dioph}) and (\ref{eq:root of two decay}). Finally, we obtain
$$
h(\xi)-h(\xi_0)=\Obig((k\Delta_n)^{\nu(1+\delta)-\delta})=\Obig(|\phi(\xi)-\phi(\xi_0)|^{\nu(1+\delta)-\delta})=
\Obig(|\xi-\xi_0|^{\nu(1+\delta)-\delta})$$
\end{proof}

\begin{lemma}\label{lemma:E}
If $\sigma\in[0,1+\delta)$, then $E_{n,\sigma}=\Obig\bigl(\Delta_n^{\frac{\sigma}{1+\delta}}\bigr)$.
\end{lemma}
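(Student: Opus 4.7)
The plan is to reduce the bound on $E_{n,\sigma}=\sum_{k=0}^n\frac{\Delta_n}{\Delta_{n-k}}\Delta_{n-k-1}^\sigma$ to a geometric series by combining the Diophantine estimate (\ref{eq:Dioph}) with the universal decay estimate (\ref{eq:root of two decay}). The Diophantine condition lets me replace $\Delta_{n-k-1}^\sigma$ by a power of the \emph{next} denominator $\Delta_{n-k}$, which is crucial because then both factors of the summand become powers of a single $\Delta_{n-k}$.

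\medskip

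First I would apply (\ref{eq:Dioph}) in the form $\Delta_{n-k-1}=\Obig\bigl(\Delta_{n-k}^{1/(1+\delta)}\bigr)$, so that
$$
\Delta_{n-k-1}^\sigma=\Obig\bigl(\Delta_{n-k}^{\sigma/(1+\delta)}\bigr),
$$
and hence
$$
E_{n,\sigma}=\Obig\left(\sum_{k=0}^n \Delta_n\,\Delta_{n-k}^{\sigma/(1+\delta)-1}\right).
$$
Set $\tau=\sigma/(1+\delta)-1$; by hypothesis $\sigma<1+\delta$, so $\tau<0$. Now the second step is to exploit (\ref{eq:root of two decay}), which gives $\Delta_{n-k}\ge (\sqrt{2})^{k-1}\,\Delta_n$; since $\tau<0$, raising both sides to the power $\tau$ reverses the inequality and yields
$$
\Delta_{n-k}^{\tau}\le (\sqrt{2})^{(k-1)\tau}\,\Delta_n^{\tau}.
$$
Substituting this bound into the sum, each summand becomes
$$
\Delta_n\,\Delta_{n-k}^{\tau}\le (\sqrt{2})^{(k-1)\tau}\,\Delta_n^{1+\tau}=(\sqrt{2})^{(k-1)\tau}\,\Delta_n^{\sigma/(1+\delta)}.
$$

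\medskip

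Finally, since $\tau<0$ we have $(\sqrt{2})^\tau<1$, so $\sum_{k=0}^n (\sqrt{2})^{(k-1)\tau}$ is a convergent geometric series bounded by a constant independent of $n$. This produces
$$
E_{n,\sigma}=\Obig\bigl(\Delta_n^{\sigma/(1+\delta)}\bigr),
$$
as desired. There is no real obstacle here: the only point requiring care is the sign of the exponent $\tau=\sigma/(1+\delta)-1$ when inverting the inequality from (\ref{eq:root of two decay}), and the fact that the strict inequality $\sigma<1+\delta$ is exactly what is needed to make the resulting geometric ratio strictly less than $1$ so that the sum converges uniformly in $n$.
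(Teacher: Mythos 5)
Your proof is correct and follows essentially the same route as the paper: apply (\ref{eq:Dioph}) to rewrite each summand as $\Delta_n\Delta_{n-k}^{\sigma/(1+\delta)-1}$, then use (\ref{eq:root of two decay}) to dominate the resulting sum by a convergent geometric series, yielding $\Obig\bigl(\Delta_n^{\sigma/(1+\delta)}\bigr)$. The only difference is that you spell out explicitly the inequality reversal for the negative exponent $\tau$, which the paper leaves implicit.
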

\begin{proof}
Due to (\ref{eq:Dioph}) we have
$$
E_{n,\sigma}=\Obig\left(\Delta_n\sum_{k=0}^n\Delta_{n-k}^{\frac{\sigma}{1+\delta}-1}\right)
$$
The statement of the lemma follows, since
$\sum_{k=0}^n\Delta_{n-k}^{\frac{\sigma}{1+\delta}-1}=\Obig\bigl(\Delta_n^{\frac{\sigma}{1+\delta}-1}\bigr)$
because of (\ref{eq:root of two decay}).
\end{proof}

This subsection is summarized by

\begin{proposition}\label{prop:T->h}
Suppose that for a diffeomorphism $T\in C^1(\unitcircle)$, $T'>0$, with rotation number $\rho\in D_\delta$,
$\delta\ge0$, there exists a continuous density $h$ of the invariant measure, and the following asymptotical
estimate holds true: $(T^{q_n})'(\xi)=1+\Obig(E_{n,\sigma})$ with certain constant $\sigma\in[0,1+\delta)$. Then
$(T^{q_n})'(\xi)=1+\Obig\bigl(\Delta_n^\frac{\sigma}{1+\delta}\bigr)$ and $h\in
C^{\max\{0,\sigma-\delta\}}(\unitcircle)$.
\end{proposition}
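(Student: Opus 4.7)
The proposition is essentially a packaging of the two preceding lemmas, so my plan is to invoke them in sequence and verify that the hypotheses match.

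First I would apply Lemma~\ref{lemma:E} directly to the asymptotic estimate $(T^{q_n})'(\xi)=1+\Obig(E_{n,\sigma})$. Since $\sigma\in[0,1+\delta)$ by assumption, the lemma yields $E_{n,\sigma}=\Obig(\Delta_n^{\sigma/(1+\delta)})$, and hence
\[
(T^{q_n})'(\xi)=1+\Obig\bigl(\Delta_n^{\sigma/(1+\delta)}\bigr),
\]
which is the first conclusion.

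Next I would set $\nu:=\sigma/(1+\delta)\in[0,1)$ and try to feed this into Lemma~\ref{lemma:T->h} in order to obtain the Hoelder regularity of $h$. The only thing to check is the admissibility condition $\nu\in[\delta/(1+\delta),1]$ appearing in that lemma. If $\sigma\ge\delta$, then $\nu\ge\delta/(1+\delta)$ and Lemma~\ref{lemma:T->h} gives $h\in C^{\nu(1+\delta)-\delta}(\unitcircle)=C^{\sigma-\delta}(\unitcircle)$, which is exactly the claim in this subcase. If instead $\sigma<\delta$, then $\max\{0,\sigma-\delta\}=0$, and the conclusion $h\in C^0(\unitcircle)$ is already part of the hypotheses (continuity of the density).

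There is no substantial obstacle here; the only mildly delicate point is the case analysis on whether $\sigma\ge\delta$ or $\sigma<\delta$, needed only because Lemma~\ref{lemma:T->h} is stated for $\nu$ bounded below by $\delta/(1+\delta)$, whereas the proposition allows $\sigma$ all the way down to $0$. Once that dichotomy is noted, the proof reduces to two one-line applications of the preceding lemmas.
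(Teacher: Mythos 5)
Your proposal is correct and follows exactly the same route as the paper, which simply states that the proposition ``follows from Lemmas~\ref{lemma:E} and~\ref{lemma:T->h} immediately.'' Your explicit handling of the case $\sigma<\delta$ (where the claimed regularity degenerates to mere continuity, already assumed) is a small point the paper leaves implicit, but it is the right observation and completes the argument.
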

\begin{proof}
Follows from Lemmas~\ref{lemma:E} and~\ref{lemma:T->h} immediately.
\end{proof}

\subsection{Proof of Theorem~1}

Recall that we need to prove Theorem~1 for $r=3+\beta$, $0<\beta<\delta<1$. We will use a finite inductive
procedure based on Propositions~\ref{prop:T->T} and~\ref{prop:T->h} to improve step by step the Denjoy-type
estimate in the form
\begin{equation}\label{eq:theorem:Denjoy}
(T^{q_n})'(\xi)=1+\Obig(E_{n,\sigma})
\end{equation}
From~\cite{KT-jams}, it follows that (\ref{eq:theorem:Denjoy}) holds true for $\sigma=1$ (see
(\ref{eq:Denjoy:2+alpha})), so this will be our starting point.  Consider the sequence $\sigma_0=1$,
$\sigma_{i+1}=\min\left\{1+\beta,\frac{2}{1+\delta}\sigma_i\right\}$, $i\ge0$. The inductive step is given by the
following
\begin{lemma}\label{lemma:theorem}
Suppose that $\sigma_i\in[1,1+\beta]$ and (\ref{eq:theorem:Denjoy}) holds for $\sigma=\sigma_i$. Then
$\sigma_{i+1}\in[1,1+\beta]$ and (\ref{eq:theorem:Denjoy}) holds for $\sigma=\sigma_{i+1}$.
\end{lemma}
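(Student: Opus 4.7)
The plan is to execute one turn of the bootstrap cycle that the entire subsection has been building toward: feed the current Denjoy-type estimate $(T^{q_n})'(\xi) = 1 + \Obig(E_{n,\sigma_i})$ first into Proposition~\ref{prop:T->h} to extract both a power-of-$\Delta_n$ estimate and H\"older regularity of $h$, then feed those into Proposition~\ref{prop:T->T} to produce a sharper $E_{n,\sigma_{i+1}}$ estimate. The recursion $\sigma_{i+1} = \min\{1+\beta,\frac{2}{1+\delta}\sigma_i\}$ is precisely what the two propositions, composed, deliver.

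In detail: since $\sigma_i \le 1+\beta < 1+\delta$, Proposition~\ref{prop:T->h} applies with $\sigma = \sigma_i$ and yields simultaneously $(T^{q_n})'(\xi) = 1 + \Obig(\Delta_n^{\sigma_i/(1+\delta)})$ and $h \in C^{\sigma_i - \delta}(\unitcircle)$ (the exponent is positive because $\sigma_i \ge 1 > \delta$). I would then invoke Proposition~\ref{prop:T->T} with $\nu = \sigma_i/(1+\delta)$ and $\gamma = \sigma_i - \delta$, producing
\[
(T^{q_n})'(\xi) = 1 + \Obig\bigl(E_{n,\,1+\min\{\beta,\,\sigma_i-\delta,\,(2\sigma_i - 1 - \delta)/(1+\delta)\}}\bigr).
\]

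The step I expect to be the real content is showing that the new H\"older exponent $\gamma = \sigma_i - \delta$ never becomes the binding constraint in the inner $\min$. Concretely, I need $\sigma_i - \delta \ge (2\sigma_i - 1 - \delta)/(1+\delta)$, which after clearing denominators collapses to $(1-\delta)(1+\delta - \sigma_i) \ge 0$. This is nonnegative precisely because $\delta < 1$ and $\sigma_i \le 1+\beta < 1+\delta$ — exactly the range the induction maintains. Consequently the inner minimum reduces to $\min\{\beta,\,(2\sigma_i - 1 - \delta)/(1+\delta)\}$, and adding $1$ gives $\min\{1+\beta,\,2\sigma_i/(1+\delta)\} = \sigma_{i+1}$, which is the desired exponent.

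It remains only to verify the trivial side condition $\sigma_{i+1} \in [1,1+\beta]$: the cap at $1+\beta$ is built into the definition of $\sigma_{i+1}$, and $\sigma_{i+1} \ge \sigma_i \ge 1$ because $2/(1+\delta) > 1$ (using $\delta < 1$), so the sequence is non-decreasing. This closes the induction step and, since $2/(1+\delta) > 1$ is a strict expansion factor below the cap, guarantees that finitely many iterations reach $\sigma_i = 1+\beta$, which is the exponent that ultimately powers the proof of Theorem~1.
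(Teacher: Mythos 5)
Your proposal is correct and follows essentially the same route as the paper: apply Proposition~\ref{prop:T->h} with $\sigma=\sigma_i<1+\delta$ to get $\nu=\sigma_i/(1+\delta)$ and $\gamma=\sigma_i-\delta$, feed these into Proposition~\ref{prop:T->T}, and observe that the H\"older exponent drops out of the minimum because $(1-\delta)(1+\delta-\sigma_i)\ge0$ --- exactly the algebraic identity the paper uses. The remaining bookkeeping on $\sigma_{i+1}\in[1,1+\beta]$ matches as well.
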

\begin{proof}
First of all, notice that $\sigma_i<1+\delta$ since $\beta<\delta$. Proposition~\ref{prop:T->h} implies that $h\in
C^{\gamma_i}(\unitcircle)$ with $\gamma_i=\sigma_i-\delta\in(0,1)$ and
$(T^{q_n})'(\xi)=1+\Obig(\Delta_n^{\nu_i})$ with $\nu_i=\frac{\sigma_i}{1+\delta}\in(0,1)$.
Proposition~\ref{prop:T->T} then implies that (\ref{eq:theorem:Denjoy}) holds for
$\sigma=\min\{1+\beta,1+\gamma_i,2\nu_i\}$, and this is exactly $\sigma_{i+1}$ since
$1+\sigma_i-\delta>\frac{2\sigma_i}{1+\delta}$ (indeed,
$(1+\sigma_i-\delta)(1+\delta)-2\sigma_i=(1-\delta)(1+\delta-\sigma_i)>0$). The bounds on $\sigma_{i+1}$ are easy
to derive.
\end{proof}
What is left is to notice that $\sigma_i=\min\left\{1+\beta,\left(\frac{2}{1+\delta}\right)^i\right\}$, $i\ge0$,
where $\frac{2}{1+\delta}>1$, so this sequence reaches $1+\beta$ in a finite number of steps. And as soon as
(\ref{eq:theorem:Denjoy}) with $\sigma=1+\beta$ is shown, Proposition~\ref{prop:T->h} implies that $h\in
C^{1+\beta-\delta}$. Theorem~1 is proven.

{\em Acknowledgement.} The author thanks Konstantin Khanin for inspiration and lots of useful discussions.

\begin{center}
\bf References
\end{center}

\begin{enumerate}

\bibitem{KT-jams} K.~Khanin and A.~Teplinsky. Herman's theory
revisited, {\tt arXiv:math.DS/0707.0075}.

\bibitem{H} M.-R.~Herman. Sur la conjugaison differentiable des diffeomorphismes du cercle a des rotations,
{\em I.\ H.\ E.\ S.\ Publ.\ Math.}, {\bf 49}, 5--233 (1979).

\bibitem{Y} J.-C.~Yoccoz. Conjugaison differentiable des diffeomorphismes du cercle dont le nombre de rotation
verifie une condition diophantienne. {\em Ann.\ Sci.\ Ecole Norm.\ Sup.\ (4)}, {\bf 17}:3, 333--359 (1984).

\bibitem{KO1} Y.~Katznelson and D.~Ornstein. The differentiability of the conjugation of certain diffeomorphisms
of the circle, {\em Ergodic Theory Dynam. Systems}, {\bf 9}:4, 643--680 (1989).

\bibitem{SK} Ya.~G.~Sinai and K.~M.~Khanin, Smoothness of conjugacies of diffeomorphisms of the circle with rotations,
{\em Uspekhi Mat.~Nauk} {\bf 44}:1, 57–-82 (1989), in Russian; English transl., {\em Russian Math.~Surveys} {\bf
44}:1, 69–-99 (1989).

\end{enumerate}

\end{document}